\theoremstyle{plain}
\newtheorem{thm}{Theorem}[section]
\newtheorem{cor}[thm]{Corollary}
\newtheorem{prop}[thm]{Proposition}
\newtheorem{lemma}[thm]{Lemma}
\theoremstyle{definition}
\newtheorem*{defn}{Definition}
\newtheorem*{rem}{Remark}
\DeclareMathOperator{\GL}{GL}\DeclareMathOperator{\PSL}{PSL}
\DeclareMathOperator{\SL}{SL}
\DeclareMathOperator{\Isom}{Isom}
\DeclareMathOperator{\Tr}{Tr}
\DeclareMathOperator{\PGL}{PGL}\DeclareMathOperator{\Tri}{Tri}
\DeclareMathOperator{\Stab}{Stab}
\newcommand{\abs}[1]{\left\vert#1\right\vert}
\newcommand{\set}[1]{\left\{#1\right\}}
\newcommand{\norm}[1]{\left\vert \left\vert #1\right\vert\right\vert}
\newcommand{\pr}[1]{\left( #1 \right) }
\newcommand{\su}{\subset}
\newcommand{\bu}{\bigcup}
\newcommand{\lra}{\longrightarrow}
\newcommand{\B}[1]{\ensuremath{\mathbf{#1}}}
\newcommand{\Cal}[1]{\ensuremath{\mathcal{#1}}}
\newcommand{\Fr}[1]{\ensuremath{\mathfrak{#1}}}
\newcommand{\Hy}{\mathbf{H}}
\newcommand{\N}{\mathbf{N}}
\newcommand{\Q}{\mathbf{Q}}
\newcommand{\R}{\mathbf{R}}
\newcommand{\Z}{\mathbf{Z}}
\begin{document}
\bibliographystyle{plain}


\title{\textbf{Primitive geodesic lengths and \\ (almost) arithmetic progressions}}
\author{J.-F. Lafont\thanks{The Ohio State University, Columbus, OH. E-mail: \tt{jlafont@math.ohio-state.edu}} \, and D. B. McReynolds\thanks{Purdue University, West Lafayette, IN. E-mail: \tt{dmcreyno@math.purdue.edu}}}
\maketitle

\begin{abstract}
\noindent In this article, we investigate when the set of primitive geodesic lengths on a Riemannian manifold 
have arbitrarily long arithmetic progressions. We prove that in the space of negatively curved metrics, a metric 
having such arithmetic progressions is quite rare. We introduce almost arithmetic progressions, a coarsification of arithmetic progressions, and prove that every negatively curved, closed Riemannian manifold has arbitrarily long almost arithmetic progressions in its primitive length spectrum. Concerning genuine arithmetic progressions, we prove that every non-compact, locally symmetric, arithmetic manifold has arbitrarily long arithmetic progressions in its primitive length spectrum. We end with a conjectural characterization of arithmeticity in terms of arithmetic progressions in the primitive length spectrum. We also suggest an approach to a well known spectral rigidity problem based on the scarcity of manifolds with arithmetic progressions.
\end{abstract}

\section{Introduction}

Given a Riemannian manifold $M$, the associated geodesic length spectrum is an invariant of central importance. When the manifold $M$ is closed and equipped with a negatively curved metric, there are several results that show primitive, closed geodesics on $M$ play the role of primes in $\Z$ (or prime ideals in $\Cal{O}_K)$. Prime geodesic theorems like Huber \cite{Huber}, Margulis \cite{Margulis}, and Sarnak \cite{Sarnak} on growth rates of closed geodesics of length at most $t$ are strong analogs of the prime number theorem (see, for instance, also \cite{Dietmar}, \cite{PS}, \cite{SY}, and \cite{Stopple}). Sunada's construction of length isospectral manifolds \cite{Sunada} was inspired by a similar construction of non-isomorphic number fields with identical Dedekind $\zeta$--functions (see \cite{Perlis}). The Cebotarev density theorem has also been extended in various directions to lifting behavior of closed geodesics on finite covers (see \cite{Sunada2}). There are a myriad of additional results, and this article continues to delve deeper into this important theme. Let us start by introducing some basic terminology:

\begin{defn}
Let $(M,g)$ be a Riemannian orbifold, and $[g]$ a conjugacy class inside the orbifold fundamental group 
$\pi_1(M)$. We let $L_{[g]} \subset [0,\infty)$ consist of the lengths of all closed orbifold geodesics in $M$ which represent the conjugacy class $[g]$.  This could be empty if $M$ is non-compact, and if $M$ is a compact manifold (rather than orbifold), then $L_{[g]}$ takes values in $\R^+:=(0,\infty)$. The \textbf{length spectrum} of $(M,g)$ is the multiset $\mathcal L(M,g)$ obtained by taking the union of all the sets $L_{[g]}$, where $[g]$ ranges over all conjugacy classes in $M$.

We say a conjugacy class $[g]$ is \textbf{primitive} if the element $g$ is not a proper power of some other element (in particular $g$ must have infinite order). The \textbf{primitive length spectrum} of $(M,g)$ is the multiset $\mathcal L_p(M,g)$ obtained by taking the union of all the sets $L_{[g]}$, where $[g]$ ranges over all primitive conjugacy classes in $M$.
\end{defn}

\subsection{Arithmetic progressions}

Partially inspired by the analogy with primes, we are interested in understanding, for a closed Riemannian manifold $(M,g)$, the structure of the primitive length spectrum $\Cal{L}_p (M,g)$. Specifically, we would like to analyze whether or not the multiset of positive real numbers $\Cal{L}_p (M,g)$ contains arbitrarily long arithmetic progressions. 

\begin{defn}
We say that a multiset $S$ contains a \textbf{$k$--term arithmetic progression} if it contains a sequence of numbers 
$x_1 < x_2 < \cdots < x_k$ with the property that, for some suitable $a, b$, we have $x_j= aj+b$. 
\end{defn}

We will say a (multi)-set $S$ \textbf{has arithmetic progressions} if it contains $k$--term arithmetic progressions for all 
$k\geq 3$. We will say that a (multi)-set of positive numbers \textbf{has no arithmetic progressions} if it contains no 
$3$--term arithmetic progressions (and hence, no $k$--term arithmetic progression with $k\geq 3$). 
Note that we do not allow for {\it constant} arithmetic progressions -- so that multiplicity of
entries in $S$ are not detected by, and do not influence, our arithmetic progressions.
Our first result 
indicates that generically, the primitive length spectrum of a negatively curved manifold has no arithmetic progression.

\begin{thm}\label{noAP-G-delta-dense}
Let $M$ be a closed, smooth manifold and let $\mathcal M(M)$ denote the space of all negatively curved 
Riemannian metrics on $M$, equipped with the Lipschitz topology. If $\mathcal X(M) \subseteq \mathcal M(M)$ 
is the set of negatively curved metrics $g$ whose primitive length spectrum $\Cal{L}_p (M,g)$ has no arithmetic 
progression, then $\mathcal X(M)$ is a dense $G_\delta$ set inside $\mathcal M(M)$.
\end{thm}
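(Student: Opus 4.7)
The plan is to write $\Cal{X}(M) = \bigcap_{(q,\eps) \in (\Q^+)^2} V_{q,\eps}$, where $V_{q,\eps} \su \Cal{M}(M)$ consists of those metrics $g$ for which $\Cal{L}_p(M,g) \cap [0,q]$ contains no triple $x_1 < x_2 < x_3$ with $x_1 + x_3 = 2x_2$ and $x_3 - x_2 \geq \eps$. Every $3$--term arithmetic progression of positive reals has a rational upper bound on its largest term and a rational positive lower bound on its common difference, so this decomposition is exact; a Baire category argument then reduces the theorem to showing that each $V_{q,\eps}$ is open and dense in the Lipschitz topology on $\Cal{M}(M)$.

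For openness, I would use the standard fact that in negative curvature $\Cal{L}_p(M,g) \cap [0,q+1]$ is finite and that each primitive closed geodesic (being hyperbolic) persists under small Lipschitz perturbations of $g$ with length depending continuously on the metric. Given $g \in V_{q,\eps}$, the quantity $\abs{x_1 + x_3 - 2x_2}$ admits a positive lower bound $\delta_0 > 0$ over the finite collection of triples in $\Cal{L}_p(M,g) \cap [0,q+1]$ with $x_3 - x_2 \geq \eps/2$. A sufficiently small Lipschitz perturbation $g'$ moves each of the finitely many relevant lengths by less than $\delta_0/10$ and cannot deform a geodesic of original length $> q+1$ into one of length $\leq q$, so $g' \in V_{q,\eps}$.

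The heart of the proof is density. Given $g_0 \in \Cal{M}(M)$, a Lipschitz neighborhood $\Cal{U}$ of $g_0$, and parameters $q,\eps$, I would enumerate the primitive closed geodesics $\ga_1,\dots,\ga_N$ of $g_0$--length at most $q+1$ and construct a smooth metric $g' \in \Cal{U}$ whose length vector $(\ell_1',\dots,\ell_N')$ avoids the finitely many affine hyperplanes $\set{x_i + x_k = 2x_j}$ in $\R^N$. The key tool is an independent perturbation lemma: for each $i$ one selects a point $p_i \in \ga_i$ lying on no other $\ga_j$ (possible because distinct primitive closed geodesics in negative curvature have distinct image sets) and a smooth bump deformation of $g_0$ supported in a tiny neighborhood of $p_i$ that alters $\ell(\ga_i)$ to first order while fixing every $\ell(\ga_j)$ for $j \neq i$. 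Superimposing these deformations yields an $N$--parameter family of metrics whose length map has a diagonal, hence surjective, differential at $g_0$, so by a direct transversality argument small generic parameters produce a metric in $V_{q,\eps} \cap \Cal{U}$. The main obstacle is establishing this independent perturbation lemma: it is a classical bumpy-metric style argument, but one must ensure the bump supports can be chosen mutually disjoint from all other geodesics (using primitivity and the fact that distinct closed geodesics in negative curvature are not cofoliated) and small enough to keep the perturbation inside $\Cal{U}$. Once openness and density are in hand, the Baire Category Theorem in the Lipschitz topology delivers that $\Cal{X}(M)$ is a dense $G_\delta$.
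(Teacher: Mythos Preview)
Your argument is essentially correct but follows a genuinely different route from the paper's.

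The paper separates the two halves cleanly. For the $G_\delta$ structure, it indexes over ordered triples $t=(\gamma_1,\gamma_2,\gamma_3)$ of free homotopy classes (countable since $\pi_1(M)$ is finitely generated) and writes $\mathcal X(M)=\bigcap_t L_t^{-1}(\R^3\setminus A)$, where $L_t$ records the three geodesic lengths and $A$ is the closed union of the three hyperplanes $x+y=2z$, $x+z=2y$, $y+z=2x$. For density, it proves a much stronger standalone result (Theorem~\ref{perturb-no-AP}): any negatively curved metric can be perturbed, via an \emph{infinite} iterative construction that kills potential third terms of progressions one geodesic at a time, to a nearby metric whose primitive length spectrum has no $3$--term progression at all. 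Density of $\mathcal X(M)$ is then immediate, with no appeal to Baire.

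Your decomposition by $(q,\eps)$ and your density step are more elementary: at each stage you only need a finite bumpy--metric transversality argument to push a length vector off finitely many hyperplanes, rather than the paper's infinite induction. The price is that you must invoke the Baire category theorem to pass from ``each $V_{q,\eps}$ dense'' to ``$\bigcap V_{q,\eps}$ dense'', and here you should be careful: $\mathcal M(M)$ with the Lipschitz metric is not obviously complete (Lipschitz convergence controls only $C^0$ data on the metric, not curvature), so it is not clear it is a Baire space. The clean fix is to run the Baire step in the $C^\infty$ topology, where $\mathcal M(M)$ is open in a Fr\'echet space and hence Baire; your bump perturbations are $C^\infty$--small, your openness argument upgrades verbatim, and density in the finer $C^\infty$ topology then passes down to the coarser Lipschitz topology. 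One further small point: your claim that distinct primitive closed geodesics have distinct image sets fails for $\gamma$ and $\gamma^{-1}$, but since these always have equal length this does not disturb the transversality argument.

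In summary, the paper pays for an infinite inductive construction and in return gets density of $\mathcal X(M)$ directly, together with the independently interesting Theorem~\ref{perturb-no-AP}; you trade that for a simpler finite perturbation at each stage plus a Baire step, which is lighter but yields only the $G_\delta$--dense conclusion.
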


Recall that any two Riemannian metrics $g,h$ on the manifold $M$ are automatically bi-Lipschitz equivalent to 
each other. Let $1\leq \lambda_0$ denote the infimum of the set of real numbers $\lambda$ such that there
exists a $\lambda$--bi-Lipschitz map $f_\lambda\colon (M, g) \to (M,g')$ homotopic to the identity map. The \textbf{Lipschitz distance} 
between $g,g'$ is defined to be $\log (\lambda_0)$, and the \textbf{Lipschitz topology} on the space of metrics is the topology 
induced by this metric. The key to establishing Theorem \ref{noAP-G-delta-dense} lies in showing that any negatively curved 
metric can be slightly perturbed to have no arithmetic progression:

\begin{thm}\label{perturb-no-AP}
Let $(M,g)$ be a negatively curved closed Riemannian manifold. For any $\epsilon >0$, there exists a new Riemannian metric $(M, \bar g)$ with the property that:
\begin{itemize}
\item $(M, \bar g)$ is negatively curved (hence $\bar g \in \mathcal M(M)$).
\item For any $v\in TM$, we have the estimate $(1-\epsilon)\norm{v}_{g} \leq \norm{v}_{\bar g} \leq \norm{v}_{g}$.
\item The corresponding length spectrum $\Cal{L}_p (M, \bar g)$ has no arithmetic progression.
\end{itemize}
In particular, the metric $\bar g$ lies in the subset $\mathcal X(M)$
\end{thm}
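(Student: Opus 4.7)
The plan is to establish the stronger statement that, within a suitable Baire space of negatively curved metrics satisfying the required bi-Lipschitz bound, the metrics whose primitive length spectrum contains no $3$--term arithmetic progression form a dense $G_\delta$. The desired $\bar g$ is then any element of this set.

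Fix an integer $k \geq 2$ and let $\mathcal N$ denote the set of $C^k$ Riemannian metrics $h$ on $M$ which are negatively curved and satisfy the \emph{strict} pointwise bound $(1-\epsilon)\norm{v}_g < \norm{v}_h < \norm{v}_g$ for every nonzero $v \in TM$. Since negative sectional curvature is an open condition in the $C^2$--topology and strict pointwise comparability is open in the $C^0$--topology, $\mathcal N$ is an open subset of the Banach space of symmetric $C^k$ $(0,2)$--tensors on $M$, hence a Baire space; it is nonempty because $(1-\epsilon/2)^2 g$ lies in it. Call an ordered triple $(\alpha,\beta,\gamma)$ of distinct primitive conjugacy classes in $\pi_1(M)$ \emph{admissible} if their closed geodesic representatives are pairwise distinct as subsets of $M$, equivalently if no two of the three classes are mutually inverse. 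There are countably many admissible triples, and for each one we set
\[ U_{\alpha\beta\gamma} \;=\; \set{\, h \in \mathcal N \;:\; \ell_h(\alpha)+\ell_h(\gamma)\neq 2\ell_h(\beta)\,}, \]
where $\ell_h(\delta)$ denotes the length of the unique closed $h$--geodesic in the free homotopy class of $\delta$. Each $U_{\alpha\beta\gamma}$ is open in the $C^k$--topology by continuous dependence of closed geodesic lengths on the metric. Any $3$--term AP in the primitive length spectrum of $h$ consists of three strictly increasing lengths, and the three primitive classes realizing them must then have pairwise distinct lengths; since $\ell_h(\delta)=\ell_h(\delta^{-1})$, their geodesic representatives are then pairwise distinct, so the classes form an admissible triple. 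Consequently any metric in $\bigcap U_{\alpha\beta\gamma}$ (taken over admissible triples) has no $3$--term AP, and such a metric exists by the Baire category theorem once density is verified.

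The crux is the density of each $U_{\alpha\beta\gamma}$. Fix an admissible triple, fix $h \in \mathcal N$ satisfying $\ell_h(\alpha)+\ell_h(\gamma)=2\ell_h(\beta)$, and let $c_\alpha, c_\beta, c_\gamma$ denote the corresponding closed $h$--geodesics. In a closed negatively curved manifold any two distinct closed geodesics share only finitely many points: otherwise they would agree on an open arc, and hence everywhere, by uniqueness of geodesics with prescribed $1$--jet. Since $c_\alpha, c_\beta, c_\gamma$ are pairwise distinct subsets of $M$ by admissibility, we may choose $p \in c_\beta \setminus (c_\alpha \cup c_\gamma)$ and a small open ball $B \ni p$ with $B\cap(c_\alpha\cup c_\gamma)=\es$. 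Pick a smooth $\phi\leq 0$ with $\mathrm{supp}(\phi)\subset B$ and $\phi(p)<0$, and form the conformal deformation $h_s\bdef e^{s\phi}h$ for $s\geq 0$. For $s$ sufficiently small, $h_s\in\mathcal N$: the upper bound $\norm{v}_{h_s}\leq\norm{v}_h<\norm{v}_g$ follows from $\phi\leq 0$, the lower bound from the strict positive margin of $h$ in $\mathcal N$ and compactness of $M$, and negative curvature from continuous dependence of sectional curvatures on the $C^2$--jet of the metric. The curves $c_\alpha, c_\gamma$ are $h$--geodesics disjoint from $\mathrm{supp}(\phi)$, so they remain $h_s$--geodesics of unchanged $h_s$--length, and by negative curvature of $h_s$ they remain the unique closed $h_s$--geodesic representatives of their classes; hence $\ell_{h_s}(\alpha)=\ell_h(\alpha)$ and $\ell_{h_s}(\gamma)=\ell_h(\gamma)$. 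The first variation of length along $c_\beta$ (parametrized by $h$--arc length) yields
\[ \left.\frac{d}{ds}\right|_{s=0}\ell_{h_s}(\beta) \;=\; \tfrac{1}{2}\int_0^{\ell_h(\beta)} \phi\bigl(c_\beta(t)\bigr)\, dt \;<\; 0, \]
so $\ell_{h_s}(\alpha)+\ell_{h_s}(\gamma)-2\ell_{h_s}(\beta)>0$ for all sufficiently small $s>0$, producing metrics in $U_{\alpha\beta\gamma}$ in every $C^k$--neighborhood of $h$.

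The principal obstacle in this plan is the geometric input that three pairwise distinct closed geodesics in a negatively curved closed manifold can always be locally separated, i.e., that one of them contains a point around which a ball avoids the other two. This reduces to uniqueness of geodesics with prescribed initial $1$--jet together with compactness of the individual geodesics; combined with the first-variation formula and the openness of the negative-curvature condition under $C^2$--perturbations, it converts the Baire category framework into a concrete perturbation argument yielding $\bar g$.
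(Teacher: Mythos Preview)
Your argument is correct, and it takes a genuinely different route from the paper's. The paper proves Theorem~\ref{perturb-no-AP} by an explicit iterative construction: it enumerates the primitive geodesics by length, and at stage $n$ uses a local metric perturbation (Proposition~\ref{perturb-near-geodesic}) to slightly shorten the $(n+1)$st geodesic so as to destroy any $3$--term AP it completes among the first $n$ lengths, while leaving those $n$ lengths unchanged. Care is needed so that the sequence of metrics converges and the limit remains negatively curved; this is handled by making the successive perturbations disjointly supported and rapidly decaying. Your approach instead packages the whole thing as a Baire category argument inside the open set $\mathcal N$: each $U_{\alpha\beta\gamma}$ is open by continuous dependence of geodesic lengths, and dense by a single local conformal perturbation plus the first variation formula. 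The countable intersection then kills all triples at once.

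What each buys: the paper's construction is explicit and yields a smooth limiting metric directly, together with a concrete description of how the perturbations are organized (this is also what drives Remark~\ref{perturb-no-relation}). Your argument is shorter and conceptually cleaner, and it immediately gives genericity of the no-AP property within $\mathcal N$, not just existence; in effect you prove a constrained version of Theorem~\ref{noAP-G-delta-dense} and read off Theorem~\ref{perturb-no-AP}, reversing the paper's order of deduction. Two small points worth tightening: (i) if a smooth $\bar g$ is desired, run the same argument in the Fr\'echet space of $C^\infty$ metrics (open subsets of Fr\'echet spaces are Baire), rather than fixing $k$; (ii) your claim that distinct closed geodesics meet in finitely many points is correct, but the one-line justification via ``agree on an open arc'' is slightly too quick---the cleanest statement is that at any common point the tangent directions must differ (else uniqueness of geodesics forces coincidence), so intersections are transverse, hence isolated, hence finite by compactness.
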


The proof of Theorem \ref{perturb-no-AP} will be carried out in Section \ref{section:no-AP}. Let us deduce Theorem \ref{noAP-G-delta-dense} from Theorem \ref{perturb-no-AP}.

\begin{proof}[Proof of Theorem  \ref{noAP-G-delta-dense}]
To begin, note that the second condition in Theorem \ref{perturb-no-AP} ensures that the identity map is a
$(1-\epsilon)^{-1}$--bi-Lipschitz map from $(M,g)$ to $(M, \bar g)$. Hence, by choosing $\epsilon$ small enough, we can arrange for the Lipschitz distance between $g$, $\bar g$ to be as small as we want. In particular, we can immediately conclude that $\mathcal X(M)$ is dense inside $\mathcal M(M)$. Since $M$ is compact, the set $[S^1, M]$ of free homotopy classes of loops in $M$ is countable (it corresponds to conjugacy classes of elements in the finitely generated group $\pi_1(M)$). Let  $\Tri(M)$ denote the set of ordered triples of distinct elements in $[S^1,M]$, which is still a countable set. Fix a triple ${t}:= (\gamma_1, \gamma_2, \gamma_3)\in \Tri(M)$ of elements in $[S^1,M]$. For any $g\in \mathcal M(M)$, we can measure the length of the $g$--geodesic in the free homotopy class represented by each $\gamma_i$. This yields a continuous function $L_{{t}}\colon \mathcal M(M) \to \R^3$ when $\mathcal M(M)$ is equipped with the Lipschitz metric. Consider the subset $A \subset \R^3$ consisting of all points whose three coordinates form a $3$--term arithmetic progression. Note that $A$ is a closed subset in $\R^3$, as it is just the union of the three hyperplanes $x+y=2z$, $x+z=2y$, and $y+z=2x$. Since $\R^3 \setminus A$ is open, so is $L_{{t}}^{-1}\big(\R^3 \setminus A\big) \subset \mathcal M(M)$. However, we have by definition that $\mathcal X(M) = \bigcap_{{t}\in T(M)}L_{{t}}^{-1}\big(\R^3 \setminus A\big)$ establishing that $\mathcal X(M)$ is a $G_{\delta}$ set.
\end{proof}

Our proof of Theorem \ref{noAP-G-delta-dense} is actually quite general, and can be used to show that, for any continuous finitary relation on the reals, one can find a dense $G_\delta$ set of negatively curved metrics whose primitive length spectrum {\it avoids} the relation (see Remark \ref{perturb-no-relation}). As a special case, one obtains a well-known result of Abraham \cite{Abraham} that there is a dense $G_\delta$ set of negatively curved metrics whose primitive length spectrum is multiplicity free.

Now Theorem \ref{noAP-G-delta-dense} tells us that, for negatively curved metrics, the property of having arithmetic progressions in the primitive length spectrum is quite rare. There are two different ways to interpret this result:

\begin{enumerate}
\item[(1)] Arithmetic progressions are the wrong structures to look for in the primitive length spectrum. 

\item[(2)] Negatively curved metrics whose primitive length spectrum have arithmetic progressions should be very special. 
\end{enumerate}

The rest of our results attempt to explore these two viewpoints. 

\subsection{Almost arithmetic progressions}

Let us start with the first point of view (1). Since the property of having arbitrarily long arithmetic progressions is 
easily lost under small perturbations of the metric (e.g. our Theorem \ref{perturb-no-AP}), we next consider a coarsification of this notion. 

\begin{defn}
A finite sequence $x_1 < \cdots  <x_k$ is a $k$--term \textbf{$\epsilon$--almost arithmetic progression}
($k\geq 2$, $\epsilon >0$) provided we have $\abs{\frac{x_i - x_{i-1}}{x_j-x_{j-1}} -1 } < \epsilon$ for all $i, j \in \{2, \ldots, k\}$.
\end{defn}

\begin{defn}
A multiset of real numbers $S\subset \R$ is said to have \textbf{almost arithmetic progressions} if, for every $\epsilon >0$ and $k\in \N$, the set $S$ contains a $k$--term $\epsilon$--almost arithmetic progression.
\end{defn}

We provide a large class of examples of Riemannian manifolds $(M,g)$ whose primitive length spectra 
$\Cal{L}_p(M,g)$ have almost arithmetic progressions.

\begin{thm}\label{MainAAPGen}
If $(M,g)$ is a closed Riemannian manifold with strictly negative sectional curvature, then $\Cal{L}_p(M,g)$ has almost arithmetic progressions.
\end{thm}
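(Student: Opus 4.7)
The plan is to extract long almost arithmetic progressions from the sheer density of primitive lengths guaranteed by Margulis's prime geodesic theorem. For a closed Riemannian manifold $(M,g)$ with strictly negative sectional curvature, Margulis's theorem yields
\[
\pi_p(T) := \#\set{\ell \in \Cal{L}_p(M,g) \co \ell \leq T} \sim \frac{e^{hT}}{hT} \quad \text{as } T \to \iny,
\]
where $h>0$ is the topological entropy of the geodesic flow. In particular, for any fixed $\eta > 0$, the count $\pi_p(T+\eta) - \pi_p(T-\eta) \to \iny$ as $T \to \iny$, so every short window at sufficiently large height contains at least one (in fact, many) primitive lengths.

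Fix $k \geq 2$ and $\epsilon > 0$. Choose a step $a > 0$ and a half-width $\eta > 0$ small enough that $2\eta < a$ and such that both $(a-2\eta)/(a+2\eta) > 1-\epsilon$ and $(a+2\eta)/(a-2\eta) < 1+\epsilon$ (for example, any $\eta < \epsilon a/(4+2\epsilon)$ works). For each $i = 1,\ldots,k$ and each $T$, define the window
\[
W_i(T) := \brac{T + (i-1)a - \eta,\; T + (i-1)a + \eta}.
\]
By the qualitative statement recorded above, we can pick $T$ large enough (depending only on $k$, $\epsilon$, $a$) so that every $W_i(T)$ meets $\Cal{L}_p(M,g)$. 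Since $2\eta < a$, the windows $W_1(T),\ldots,W_k(T)$ are also pairwise disjoint and ordered on the real line.

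Fix such a $T$ and select any primitive length $x_i \in W_i(T) \cap \Cal{L}_p(M,g)$. Disjointness of the windows forces $x_1 < \cdots < x_k$, while $x_i \in W_i(T)$ forces $x_{i+1} - x_i \in [a - 2\eta,\; a + 2\eta]$ for each $i$. The ratio of any two consecutive differences therefore lies in $((a-2\eta)/(a+2\eta),\, (a+2\eta)/(a-2\eta)) \subset (1-\epsilon, 1+\epsilon)$, so
\[
\abs{\frac{x_i - x_{i-1}}{x_j - x_{j-1}} - 1} < \epsilon \quad \text{for all } i,j \in \set{2,\ldots,k}.
\]
This exhibits a $k$--term $\epsilon$--almost arithmetic progression inside $\Cal{L}_p(M,g)$, and letting $k$ and $\epsilon$ vary yields the theorem.

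The only genuine input is the prime geodesic theorem itself. Margulis's theorem is stated precisely for closed manifolds of variable strictly negative sectional curvature, so it applies verbatim; no uniformity in the error term is needed. In fact the argument uses only the weak qualitative statement that $\pi_p(T+\eta) - \pi_p(T-\eta) \to \iny$ for each fixed $\eta > 0$, and this in turn can be extracted directly from the exponential growth of primitive conjugacy classes in the word-hyperbolic group $\pi_1(M)$ together with bounded-distortion properties of the translation-length function, giving a self-contained route that bypasses the full strength of Margulis should one wish.
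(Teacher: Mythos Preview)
Your proof is correct and follows essentially the same route as the paper's second (density) argument, stated there as Proposition~\ref{AAPGen}: from Margulis's asymptotic $\pi_p(T)\sim e^{hT}/(hT)$ one deduces that every short interval at sufficiently large height meets $\Cal{L}_p(M,g)$, and then samples one length from each of $k$ evenly spaced windows and bounds the ratio of successive differences. Your window setup is a mild repackaging of the paper's partition of $\R^+$ into blocks $((k-1)t,kt]$, and your observation that $\pi_p(T+\eta)-\pi_p(T-\eta)\to\infty$ is exactly equivalent to the paper's hypothesis $\lim S(x-t)/S(x)\neq 1$.

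The paper also gives a first, independent proof (Proposition~\ref{APs-Anosov-flows}) via the specification property of the Anosov geodesic flow, which constructs explicit periodic orbits shadowing a short orbit followed by $n$ laps around a second orbit; the resulting minimal periods $s_n$ are within $\delta$ of $nB+A+2d$, and one reads off the almost arithmetic progression directly. That approach is more elementary (it avoids Margulis's thesis) and is constructive, actually exhibiting the geodesics; your approach (and the paper's second proof) is shorter and isolates a purely numerical criterion on the counting function that applies to any discrete multiset, not just length spectra.
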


We will give two different proofs of Theorem \ref{MainAAPGen} in Section \ref{section:AAP-generic}. The first proof is geometric/dynamical, and uses the fact that the geodesic flow on the unit tangent bundle, being Anosov, satisfies the specification property. The second proof actually shows a more general result. Specifically, any set of real numbers that is asymptotically ``dense enough'' will contain almost arithmetic progressions. Theorem \ref{MainAAPGen} is then obtained from an application of Margulis' \cite{Margulis} work on the growth rate of the primitive geodesics. The second approach is based on the spirit of Szemer\'edi's Theorem \cite{Sz} (or more broadly the spirit of the Erd\"{o}s--Turan conjecture) that large sets should have arithmetic progressions. 

\subsection{Arithmetic manifolds and progressions}

Now we move to viewpoint (2) -- a manifold whose primitive length spectrum has arithmetic progressions should be special. We show that several arithmetic manifolds have primitive length spectra that have arithmetic progressions. In the moduli space of constant $(-1)$--curvature metrics on a closed surface, the arithmetic structures make up a finite set. One reason to believe that such manifolds would be singled out by this condition is, vaguely, that one expects solutions to extremal problems on surfaces to be arithmetic. For example, the Hurwitz surfaces, which maximize the size of the isometry group as a function of the genus, are always arithmetic; it is a consequence of the Riemann--Hurwitz formula that such surfaces are covers of the $(2,3,7)$--orbifold and consequently are arithmetic.

Note that a $3$--term arithmetic progression $x<y<z$ is a solution to the equation $x+z = 2y$, and similarly, a 
$k$--term arithmetic progression can be described as a solution to a set of linear equations in $k$ variables. 
Given a ``generic'' discrete subset of $\R^+$, one would not expect to find any solutions to this linear equation within the set, and hence would expect no arithmetic progressions. Requiring the primitive length spectrum to have arithmetic progressions forces it to contain infinitely many solutions to a linear system that generically has none. Of course, constant $(-1)$--curvature is  already a rather special class of negatively curved metrics. Even within this special class of metrics, a 3--term progression in the length spectrum is still a non-trivial condition on the space of $(-1)$--curvature metrics.  Our first result shows that non-compact arithmetic manifolds have arithmetic progressions.

\begin{thm}\label{MainArithmetic}
If $X$ is an irreducible, non-compact, locally symmetric, arithmetic orbifold such that $\widetilde{X}$ is of non-compact type, then $\Cal{L}_p(X)$ has arithmetic progressions.
\end{thm}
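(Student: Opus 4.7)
I would first pass to the algebraic setup: write $X = \Gamma\backslash G(\R)/K$ with $G$ a connected $\Q$-simple semisimple algebraic group having no compact or Euclidean factors, $K \subset G(\R)$ a maximal compact subgroup, and $\Gamma \subset G(\Q)$ commensurable with $G(\Z)$ for a suitable $\Q$-structure. Non-compactness of $X$ is equivalent, by Godement's criterion, to $\rk_\Q(G) \geq 1$, so $\Gamma$ contains a supply of unipotent elements sitting inside proper $\Q$-parabolic subgroups. Closed geodesics of $X$ correspond to conjugacy classes of $\R$-regular semisimple elements of $\Gamma$, and the translation length of such a $\gamma$ is the norm $\norm{\mu(\gamma)}$ of its Cartan projection.

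The main step is to locate a $\Q$-anisotropic maximal torus $T\subset G$ such that $T(\Z)\cap\Gamma$ has $\Z$-rank at least two. By Dirichlet's unit theorem applied to the number field $K_T$ attached to $T$, this rank equals $r_1+r_2-1$, where $r_1, r_2$ count the real and complex places of $K_T$; choosing $K_T$ with unit rank $\geq 2$ and realizing $\mathrm{Res}_{K_T/\Q}(\mathbb{G}_m)\hookrightarrow G$ as a maximal torus makes this possible once $G$ has sufficient rank. Inside the resulting lattice $T(\Z)\cap\Gamma$, the translation-length map $\gamma\mapsto\norm{\mu(\gamma)}$ is the norm of a linear functional on a free abelian group of rank $\geq 2$; its image in $\R$ is either dense or a discrete subgroup of rank $\geq 2$, and in either case contains arbitrarily long arithmetic progressions. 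By choosing the lattice coordinates $(n_1,\ldots,n_r)\in\Z^r$ to run along a suitable affine line with $\gcd(n_i)=1$ (possible by Dirichlet's theorem on primes in arithmetic progressions), one realizes these progressions using indivisible elements of $T(\Z)\cap\Gamma$.

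Primitivity of the chosen elements as elements of $\Gamma$ then follows from their indivisibility inside $T(\Z)\cap\Gamma$: any $n$-th root of a regular semisimple $\gamma$ in $G$ must centralize $\gamma$ and hence lies in $Z_G(\gamma)=T$, so any root in $\Gamma$ already lies in $T(\Z)\cap\Gamma$ and is ruled out by the indivisibility condition.

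The principal obstacle is constructing the required torus $T$. In higher-rank $G$ this is routine, but for rank-one groups such as $\SL(2)$ every maximal $\Q$-torus is one-dimensional, so $T(\Z)\cap\Gamma$ is cyclic and no single torus produces an AP. These rank-one cases would need a separate construction: I would assemble primitive elements of prescribed lengths by combining tori from different real (or imaginary) quadratic extensions, using the density of the commensurator $\Comm_{G(\R)}(\Gamma)$ in $G(\R)$ (Margulis) together with the unipotent elements supplied by the non-compactness hypothesis to move conjugates of power elements into primitive classes of the original lattice. Verifying primitivity of the resulting assembled elements in the rank-one regime is the most delicate point, and is where I expect the bulk of the technical work to lie.
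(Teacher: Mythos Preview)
Your proposal has a genuine gap in the central step. In a symmetric space of real rank $r\geq 2$, the translation length of a semisimple $\gamma$ is $\norm{\mu(\gamma)}$ where $\mu$ lands in the $r$--dimensional Cartan subspace $\mathfrak a$; so on your torus lattice the length function is the norm of a linear \emph{map} into $\R^r$, not of a linear \emph{functional} into $\R$. Running along an affine line $v_0+tw$ in $T(\Z)\cap\Gamma$ therefore produces the values $\norm{L(v_0)+tL(w)}$, which are norms of equally spaced points on a line in $\R^r$ and are essentially never in arithmetic progression unless $L(v_0)$ and $L(w)$ are parallel --- and that forces the $v_0+tw$ to be proportional, hence imprimitive. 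Conversely, in real rank one the length function \emph{is} a linear functional, but then the non-compact part of $T(\R)$ is one--dimensional and the image of $T(\Z)\cap\Gamma$ in $\R$ is cyclic regardless of the $\Z$--rank of the unit group; again only multiples of a single length appear. So neither regime yields the APs you need from a single torus, and the hard $\SL_2$ case you defer to a sketch is in fact the entire problem. (Your primitivity argument via $Z_G(\gamma)=T$ is fine, modulo the usual connected--component caveat.)

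The paper takes a completely different route and avoids all of this. It first proves the result for the modular surface $\Hy^2/\PSL(2,\Z)$ by an explicit commensurator construction: conjugating an absolutely primitive hyperbolic $\gamma$ by diagonal matrices $\eta_m=\mathrm{diag}(1,m)$ and analyzing the minimal return power $P(\gamma,\eta_m)$ through $\PSL(2,\Z/m\Z)$ yields primitive elements of lengths $C_k\cdot n\cdot\ell(c_\gamma)$ for $n=1,\dots,k$. The general case is then reduced to this one: by Godement's criterion the lattice has unipotents, and the Jacobson--Morosov lemma produces a $\Q$--defined $\SL_2$ inside $\B G$ whose integer points give a totally geodesic suborbifold commensurable with the modular surface. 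Since $\mathcal L_p$ of a totally geodesic suborbifold injects into $\mathcal L_p$ of the ambient space, and since having arithmetic progressions is a commensurability invariant (Proposition \ref{CommInv}), the theorem follows. In short, rather than working inside a large torus of $G$, the paper works inside a small rank--one subgroup where the modular--surface computation already does the job.
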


In Section \ref{ArithSec}, we prove Theorem \ref{MainArithmetic}, as well as some stronger results. For example, the following result shows that non-compact, arithmetic, hyperbolic 2--manifolds have an especially rich supply of arithmetic progression.

\begin{thm}\label{LowDimExist}
If $(M,g)$ is a non-compact, arithmetic, hyperbolic $2$--manifold, then given any $\ell\in \Cal{L}_p(M,g)$ and $k\in \N$, we can find $k$--term arithmetic progression in $\Cal{L}_p(M)$ such that each term is an integer multiple of $\ell$.
\end{thm}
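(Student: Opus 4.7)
The plan is to exhibit, for each sufficiently large integer $n$ in a prescribed arithmetic progression of length $k$, a primitive element $\delta_n \in \Gamma := \pi_1(M)$ of translation length exactly $n d \ell$ for some fixed positive integer $d$. The lengths $\{(n_0 + i)d\ell : 0 \le i \le k-1\}$ will then form a $k$-term arithmetic progression in $\Cal{L}_p(M,g)$ of integer multiples of $\ell$, as required.

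After conjugating inside $\PSL_2(\R)$, $\Gamma$ is commensurable with $\PSL_2(\Z)$ inside $\PGL_2^+(\Q)$; choose a principal congruence subgroup $\Gamma(N) \le \Gamma \cap \PSL_2(\Z)$. Fix a primitive $\gamma_0 \in \Gamma$ with $\ell(\gamma_0) = \ell$, let $d$ be the order of the image of $\gamma_0$ in $\PSL_2(\Z/N)$, and replace $\gamma_0$ by $\gamma_0^d \in \Gamma(N)$, whose length is $d\ell$. A hyperbolic element of $\SL_2(\R)$ has translation length $n d \ell$ if and only if its trace is $\pm \tau_n := \pm \Tr(\gamma_0^n) \in \Z$, so the task reduces to producing, for each large $n$, a primitive element of $\Gamma$ with trace $\tau_n$.

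By classical theory, the $\PSL_2(\Z)$-conjugacy classes of hyperbolic elements with trace $\tau_n$ are in bijection with the narrow class group of the real-quadratic order of discriminant $\tau_n^2 - 4$ inside $\Q(\sqrt{\tau_1^2 - 4})$, so they number $h_n$, with $h_n \to \infty$ by the Brauer--Siegel theorem. Among these, the class containing $\gamma_0^n$ is non-primitive; but since $\Gamma(N)$ is normal in $\PSL_2(\Z)$, reduction mod $N$ partitions the trace-$\tau_n$ classes, and the class of $\gamma_0^n$ lies in the fiber over the trivial class (as $\gamma_0^n \in \Gamma(N)$). By equidistribution of these classes under reduction modulo $N$ (a form of strong approximation for $\SL_2$), the trivial-class fiber has size on the order of $h_n / [\PSL_2(\Z) : \Gamma(N)]$, which exceeds $1$ for $n$ large; so the fiber contains at least one primitive class. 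Any representative $\delta_n$ of such a primitive class lies in $\Gamma(N) \subseteq \Gamma$ and has trace $\tau_n$, hence length $n d \ell$. A short centralizer computation---using that the one-parameter subgroup through $\gamma_0$ meets $\PGL_2^+(\Q)$ only in $\langle \gamma_0 \rangle$, because $\gamma_0^t$ has rational entries only for $t \in \Z$---upgrades primitivity of $\delta_n$ in $\PSL_2(\Z)$ to primitivity in $\Gamma$.

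Selecting $k$ consecutive sufficiently large values of $n$ then produces the desired $k$-term arithmetic progression in $\Cal{L}_p(M,g)$, each term an integer multiple of $\ell$. The main obstacle is the fiber-counting step: establishing that for infinitely many $n$, the trivial-class fiber of reduction modulo $N$ on the trace-$\tau_n$ classes contains at least one primitive class beyond that of $\gamma_0^n$. This rests on the equidistribution of trace-$\tau_n$ classes across mod-$N$ classes combined with a Brauer--Siegel-type lower bound $h_n \gg \tau_n^{1/2 - \epsilon}$, and a careful check that the growing class counts force at least one primitive representative into the trivial fiber.
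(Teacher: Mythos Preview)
Your approach is genuinely different from the paper's, and in principle the class-number/equidistribution idea is attractive. But there are real gaps, and at least one of the ``easy'' steps is actually false.

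\textbf{The primitivity upgrade is wrong as stated.} You assert that the one-parameter subgroup through $\gamma_0$ meets $\PGL_2^+(\Q)$ only in $\langle\gamma_0\rangle$. This fails already for $\gamma_0=\begin{pmatrix}2&1\\1&1\end{pmatrix}$, which is primitive in $\PSL_2(\Z)$: the matrix $M=\begin{pmatrix}3&1\\1&2\end{pmatrix}$ satisfies $M^2=5\gamma_0$, so $[M]\in\PGL_2^+(\Q)$ is a square root of $[\gamma_0]$. In fact every hyperbolic element of $\PSL_2(\Z)$ has roots in $\PGL_2^+(\Q)$. Separately, $\delta_n$ merely shares the trace of $\gamma_0^n$; it lies on a \emph{different} one-parameter subgroup, so even a correct statement about the axis of $\gamma_0$ would not touch $\delta_n$. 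You therefore have no mechanism for promoting primitivity in $\PSL_2(\Z)$ to primitivity in $\Gamma$.

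\textbf{The fiber/primitive-class count is incomplete.} You treat $[\gamma_0^n]$ as the sole non-primitive class of trace $\tau_n$, but for every prime $p\mid n$ the $p$th powers of trace-$\tau_{n/p}$ classes also land among the trace-$\tau_n$ classes and are non-primitive; many of these can lie in the trivial fiber mod $N$. So ``fiber size $>1$'' does not yield a primitive class. A correct argument needs an inclusion--exclusion over divisors of $n$ together with effective bounds on each $h_{n/p}$.

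\textbf{The equidistribution input is deep.} What you call ``a form of strong approximation'' is really an equidistribution statement for closed geodesics (equivalently, ideal classes of the orders of discriminant $\tau_n^2-4$) in $\PSL_2(\Z)\backslash\PSL_2(\R)$ pushed to the finite level $N$. Results of this type exist (Linnik, Duke, et al.), but strong approximation alone says nothing about how fixed-trace classes distribute mod $N$. If you pursue this route you must state and invoke the relevant theorem precisely, including the needed effectivity.

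\textbf{Contrast with the paper.} The paper avoids all of this. It first works on the modular surface: starting from an absolutely primitive $\gamma$ (eigenvalue a fundamental unit), it conjugates by diagonal matrices $\eta_m=\mathrm{diag}(1,m)$ and reads off, via the structure of $\PSL_2(\Z/p^k\Z)$, that the minimal powers $P(\gamma,\eta_m)$ sweep out $\{C\cdot 1,\ldots,C\cdot k\}$ for an explicit constant $C$; primitivity of $\eta\gamma^{P(\gamma,\eta)}\eta^{-1}$ follows from the fundamental-unit hypothesis. Transferring to an arbitrary non-compact arithmetic surface $M$ is then a purely combinatorial two-step coloring argument via a common cover $Y$ and Van der Waerden's theorem. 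No Brauer--Siegel, no equidistribution, and the construction is explicit. Your approach, even if the gaps above were filled, would be highly nonconstructive and would rest on substantially deeper analytic input.
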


The same result also holds for non-compact, arithmetic, hyperbolic 3--orbifolds.

\begin{thm}\label{Arith3Man}
If $(M,g)$ is a non-compact, arithmetic, hyperbolic $3$--manifold, then given any $\ell\in \Cal{L}_p(M,g)$ and $k\in \N$, we can find $k$--term arithmetic progression in $\Cal{L}_p(M)$ such that each term is an integer multiple of $\ell$.
\end{thm}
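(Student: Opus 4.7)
The plan is to mirror the proof of Theorem \ref{LowDimExist} in the three-dimensional arithmetic setting. The group $\Ga = \pi_1(M,g)$ is commensurable with a Bianchi group $\PSL_2(\Cal{O}_K)$ for an imaginary quadratic field $K$, and after a standard arithmetic reduction (passing to a finite index subgroup of the commensurator if needed) we may work with primitive elements whose traces lie in $\Cal{O}_K$. Choose a primitive loxodromic $\ga_0 \in \Ga$ realizing the length $\ell$, with complex length $\la = \ell + i\te$, eigenvalue $\mu = e^{\la/2}$, and trace $t_0 = \mu + \mu^{-1} \in \Cal{O}_K$. Then $\ga_0^n$ has real length $n\ell$ and trace $T_n = \mu^n + \mu^{-n} = p_n(t_0)$, where $p_n$ is the Chebyshev-type polynomial characterized by $p_n(\Tr g) = \Tr(g^n)$ for $g \in \SL_2$. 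Since the $\ga_0^n$ are not primitive, the task is to exhibit, for each $n$ in a $k$-term arithmetic progression, a primitive element of $\Ga$ whose real length equals $n\ell$.

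The key step is the Latimer--MacDuffee correspondence: $\GL_2(\Cal{O}_K)$-conjugacy classes of elements with characteristic polynomial $x^2 - T_n x + 1$ are in bijection with ideal classes of the quadratic $\Cal{O}_K$-order $\Cal{O}_K[\mu^n]$ inside the CM field $K(\mu)$. The class of $\ga_0^n$ occupies one such ideal class; whenever the class set has at least two elements, any other conjugacy class yields an element $\delta_n \in \Ga$ of real length exactly $n\ell$ that is not $\Ga$-conjugate to $\ga_0^n$. Primitivity of $\delta_n$ follows by a compatibility check: if $\delta_n = \eta^k$ for some $\eta \in \Ga$ and $k \geq 2$, then $\eta$ has trace $p_{n/k}(t_0)$ and $k \mid n$, and the induced power map $\operatorname{Cl}(\Cal{O}_K[\mu^{n/k}]) \to \operatorname{Cl}(\Cal{O}_K[\mu^n])$ would place $[\delta_n]$ in its image, contradicting our choice of $\delta_n$ from a class outside all such images. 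Such a choice is possible whenever the class set is sufficiently large.

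To arrange $k$ such indices $n$ simultaneously in an arithmetic progression, I would apply a genus-theoretic estimate for the orders $\Cal{O}_K[\mu^n]$. Their discriminants are essentially $T_n^2 - 4 = (T_n - 2)(T_n + 2)$, and the factorization of $T_n \pm 2$ in $\Cal{O}_K$ can be controlled along $n \equiv n_0 \pmod{d}$ via a Dirichlet-style argument on the splitting of primes in the tower of orders $\{\Cal{O}_K[\mu^n]\}$. Forcing $T_n^2 - 4$ to have at least two distinct prime factors in $\Cal{O}_K$ for $n$ in the prescribed residue class makes the $2$-rank of $\operatorname{Cl}(\Cal{O}_K[\mu^n])$ positive by genus theory, so $h(\Cal{O}_K[\mu^n]) \geq 2$; one then picks $k$ such indices in AP.

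The principal obstacle is this last step: a simultaneous class-number lower bound at $k$ consecutive terms of an AP of Chebyshev indices. Isolated large class numbers are routine, but coordinating them along an AP is the delicate part, requiring a careful genus-theoretic argument. The imaginary quadratic setting introduces only mild additional complications compared to the classical case underlying Theorem \ref{LowDimExist}.
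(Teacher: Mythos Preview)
Your approach is genuinely different from the paper's, and it has real gaps. The paper never touches Latimer--MacDuffee or class groups. It simply reruns the commensurator argument behind Theorem \ref{PrimitiveConstructionSL2} with $\Cal{O}_K$ ($K$ imaginary quadratic) in place of $\Z$: for an \emph{absolutely primitive} loxodromic $\gamma\in\PSL(2,\Cal{O}_K)$ one conjugates by diagonal $\eta_\alpha\in\PGL(2,K)$ and studies $P(\gamma,\eta)=\min\{j:\eta\gamma^j\eta^{-1}\in\PSL(2,\Cal{O}_K)\}$. The exact sequences $1\to V_{\Fr{p}}\to\PSL(2,\Cal{O}_K/\Fr{p}^{j+1})\to\PSL(2,\Cal{O}_K/\Fr{p}^{j})\to 1$ together with a Chinese-remainder argument over prime ideals force $\Cal{P}(\gamma)$ to contain $\{C_k,2C_k,\ldots,kC_k\}$ for suitable $C_k\in\N$, and absolute primitivity of $\gamma$ makes each $\theta_{\gamma,\eta}=\eta\gamma^{P(\gamma,\eta)}\eta^{-1}$ primitive (exactly as in the modular-surface proof). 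Passage from $X_K$ to an arbitrary $M$ in its commensurability class is then the Van der Waerden coloring argument already used for Proposition \ref{CommInv} and Theorem \ref{LowDimExist}. No analytic input is needed.

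In your proposal two steps do not close. First, the primitivity argument: if $\delta_n=\eta^m$ with $m\geq 2$ and $\eta$ has eigenvalue $\nu$, then $\nu^m=\mu^n$, but you have not assumed $\gamma_0$ absolutely primitive, so there is no reason $\nu$ is a power of $\mu$; even granting that, one gets $\nu=\zeta\mu^{n/m}$ for some root of unity $\zeta$ in the CM field $K(\mu)$, so $\Tr(\eta)$ need not equal $p_{n/m}(t_0)$ and $\eta$ need not lie in the order $\Cal{O}_K[\mu^{n/m}]$ your power-map argument requires. And even accepting that reduction, avoiding the images of \emph{all} power maps requires $h(\Cal{O}_K[\mu^n])$ to exceed a sum of class numbers of smaller orders, not merely $h\geq 2$; your genus bound yields only $2$--rank $\geq 1$. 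Second, the ``Dirichlet-style'' control on the factorization of $T_n^2-4$ simultaneously along a $k$--term arithmetic progression in $n$ is the whole content and is left entirely unproved; since the $T_n$ grow exponentially, this is not a routine sieve. The paper's commensurator method sidesteps both difficulties: primitivity comes for free from minimality of $P(\gamma,\eta)$ plus absolute primitivity, and the progression structure is pure finite group theory rather than a class-number estimate.
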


Theorem \ref{LowDimExist} also holds for other commensurability classes of non-compact, locally symmetric, arithmetic orbifolds (see Corollary \ref{GenArith3Man}). The non-compactness condition helps avoid some difficulties that could be overcome. Recently Miller \cite{Miller} extended Theorem \ref{MainArithmetic} to compact manifolds, and proved that all arithmetic manifolds satisfy the stronger conclusions of Theorem \ref{LowDimExist}. These geometric properties suggest an approach to proving the primitive length spectrum determines a locally symmetric metric either locally or globally in the space of Riemannian metrics. This determination or rigidity result would also require an upgrade of Theorem \ref{noAP-G-delta-dense}. We also provide a conjectural characterization of arithmeticity and discuss a few existing conjectural characterizations in Section \ref{Final}.

\paragraph*{Acknowledgements}
We would like to thank Ted Chinburg, Dave Constantine, Matt Emerton, Tom Farrell, Andrey Gogolev, 
David Goldberg, Chris Judge, Vita Kala, Grant Lakeland, Marcin Mazur, Nicholas Miller, Abhishek Parab, Ben Schmidt, and Dave Witte-Morris for conversations on the material in this paper. We would also like to thank the referees for several useful comments.

The first author was partially supported by the NSF, under grants DMS-1207782, DMS-1510640. The second author was partially support by the NSF, under grants DMS-1105710, DMS-1408458. 

\section{Arithmetic progressions are non-generic}\label{section:no-AP}

In this section, we prove Theorem \ref{perturb-no-AP}. Starting with a negatively curved closed Riemannian manifold $(M,g)$, we want to construct a perturbation $\overline{g}$ of the metric so that the primitive length spectrum $\Cal{L}_p (M, \overline{g})$ contains no arithmetic progressions. The basic idea of the proof is to enumerate the geodesics in $(M,g)$ according to their length. One then goes through the geodesics in order, and each time we see a geodesic whose length forms the third term of an arithmetic progression, we perturb the metric along the geodesic to destroy the corresponding $3$--term arithmetic progression. The perturbations are chosen to have smaller and smaller support and amplitude, so that they converge to a limiting Riemannian metric. The limiting metric will then have no arithmetic progressions. We now proceed to make this heuristic precise.

\subsection{Perturbing to kill a single arithmetic progression}

Given a negatively curved Riemannian manifold $(M,g)$, we index the set of primitive geodesic loops $\{\gamma_1, \gamma_2, \ldots \}$ according to the lengths. We now establish the basic building block for our metric perturbations.

\begin{prop}\label{perturb-near-geodesic}
Let $(M,g)$ be a negatively curved closed Riemannian manifold, $\gamma_k$ a primitive geodesic in $(M, g)$ 
of length $\ell(\gamma_k)=L$, and $\epsilon >0$ a given constant. Then one can construct a negatively curved Riemannian metric $(M, \overline{g})$ satisfying the following properties. Given a loop $\eta$, we denote by $\overline{\eta}$ the unique $\overline{g}$--geodesic loop freely homotopic to $\eta$, and $\ell$ (or $\overline{\ell}$) denotes the $g$--length (or $\overline{g}$--length) of any curve in $M$. The properties
we require are:
\begin{enumerate}
\item[(a)] For any vector $v\in TM$, we have $(1-\epsilon) \norm{v}_{g} \leq \norm{v}_{\overline{g}} \leq \norm{v}_{g}$.
Moreover, all derivatives of the metric $\overline{g}$ are $\epsilon$--close to the corresponding derivatives
of the metric $g$.
\item[(b)] For an appropriate point $p$, the metric $\overline{g}$ coincides with $g$ on the complement of the $\epsilon$--ball centered at $p$.
\item[(c)] We have $L - \epsilon \leq \overline{\ell}(\overline{\gamma_k}) <L$.
\item[(d)] If $i\neq k$ with $\ell(\gamma_i) \leq L$ then $\overline{\ell}(\overline{\gamma_i}) =\ell(\gamma_i)$.
\item[(e)] If $\ell(\gamma_i) > L$, then $\overline{\ell}(\overline{\gamma_i}) >L$.
\end{enumerate}
\end{prop}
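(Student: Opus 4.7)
My plan is to construct $\overline g$ as a localized conformal rescaling $\overline g := (1-\phi)^2 g$, where $\phi \geq 0$ is a smooth bump function supported in a tiny ball around a carefully chosen point $p$ on the image of $\gamma_k$. The ball will be chosen small enough that no other primitive geodesic of length close to $L$ enters it, so that only the length of $\gamma_k$ itself is affected by the perturbation.

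First I would locate $p$. Because $(M,g)$ is closed and negatively curved, for each $R>0$ there are only finitely many primitive closed geodesics of length at most $R$; set $R := L+1$ and let $\mathcal F$ consist of those with index $j \neq k$. Two distinct geodesics in negative curvature cannot share an arc (by uniqueness of the geodesic through a point in a given direction), so each $\gamma_j \in \mathcal F$ meets $\gamma_k$ in finitely many transverse points, and $\gamma_k$ itself has only finitely many self-intersections. The set of ``bad'' points on the image of $\gamma_k$ is therefore finite, so I can choose $p \in \gamma_k$ outside it together with a radius $\rho < \epsilon$ such that $B(p,\rho)$ is contained in a normal neighborhood, is disjoint from every $\gamma_j \in \mathcal F$, and meets $\gamma_k$ in a single embedded arc through $p$.

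Next I take $\phi$ non-negative and smooth, with $\phi \leq \delta$, $\phi(p)=\delta$, and $\mathrm{supp}(\phi) \subset B(p,\rho)$, where $\delta > 0$ is a small parameter. Then $\overline g = (1-\phi)^2 g$ coincides with $g$ off $B(p,\rho) \subset B(p,\epsilon)$ and satisfies $(1-\delta)\|v\|_g \leq \|v\|_{\overline g} \leq \|v\|_g$, so (a) and (b) follow as soon as $\delta$ is taken small enough (in terms of $\rho$ and the $C^2$-geometry of $g$) that the relevant derivatives of the perturbation are $\epsilon$-close to those of $g$ and the open condition of strict negative curvature is preserved. For (c), $\gamma_k$ crosses the support of $\phi$, so $\overline\ell(\gamma_k) = L - \int_{\gamma_k}\phi\,ds < L$, hence $\overline\ell(\overline{\gamma_k}) \leq \overline\ell(\gamma_k) < L$, while the comparison $\overline g \geq (1-\delta)^2 g$ gives $\overline\ell(\overline{\gamma_k}) \geq (1-\delta) L \geq L-\epsilon$ provided $\delta \leq \epsilon/L$. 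For (d), any $\gamma_i$ with $i \neq k$ and $\ell(\gamma_i) \leq L$ lies in $\mathcal F$, so it avoids $B(p,\rho)$ entirely; it thus remains a $\overline g$-geodesic loop of the same length, and uniqueness of geodesic representatives of free homotopy classes in the negatively curved $(M,\overline g)$ forces $\overline{\gamma_i} = \gamma_i$ with $\overline\ell(\overline{\gamma_i}) = \ell(\gamma_i)$. For (e), I would split on $\ell(\gamma_i) \leq R$ (handled exactly as in (d), since $\gamma_i$ again lies in $\mathcal F$) versus $\ell(\gamma_i) > R = L+1$, where the pointwise comparison yields $\overline\ell(\overline{\gamma_i}) \geq (1-\delta)\ell(\gamma_i) > (1-\delta)(L+1) > L$ as soon as $\delta < 1/(L+1)$.

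The principal technical obstacle will be calibrating $\delta$ and $\rho$. Preservation of strict negative curvature demands that the second derivatives of $\phi$, which scale like $\delta/\rho^2$, remain dominated by the curvature gap of $g$, forcing $\delta$ to be small compared to $\rho^2$. All the constraints above are mutually compatible once $\delta$ is taken of order $\rho^2$ with $\rho \leq \epsilon$ chosen sufficiently small, and the length decrement $\overline\ell(\gamma_k) - L = O(\rho \delta)$ then comes out as $O(\epsilon^3)$, comfortably within the tolerance $\epsilon$ required by (c).
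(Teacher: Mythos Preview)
Your argument is correct, and it follows a somewhat different route from the paper's. The paper performs an \emph{anisotropic} perturbation: it sets up Fermi-type coordinates $(\tau, z)$ in a tube around $\gamma_k$ near $p$ and replaces $g = d\tau^2 + h_t$ by $\overline g = f(r)f(t)\,d\tau^2 + h_t$, shrinking only the direction along $\gamma_k$. Your conformal change $\overline g = (1-\phi)^2 g$ is conceptually simpler---no special coordinate system is needed---and it delivers (a)--(d) in exactly the same way. For (e) the paper uses discreteness of the length spectrum directly above $L$ to pick a single $\delta'$ with $(1-\delta')\ell(\gamma_i) > L$ for every longer geodesic; your device of enlarging the avoidance set $\mathcal F$ out to length $L+1$ and then using the crude comparison only for $\ell(\gamma_i) > L+1$ is an equally valid alternative that sidesteps computing the gap above $L$.

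One small clarification is worth making in your last paragraph. You phrase the calibration as ``$\delta$ of order $\rho^2$'', which reads as if $\rho$ and $\delta$ are being sent to zero together. The clean way (and what the paper does for its parameters) is to first fix $\rho$ subject to the finitely many geometric constraints ($\rho<\epsilon$, disjointness from $\mathcal F$, normal neighborhood, single arc of $\gamma_k$), then write $\phi = \delta\,\psi$ for a \emph{fixed} bump $\psi$ supported in $B(p,\rho)$, and finally let $\delta \to 0$. With $\rho$ fixed, every derivative of $\phi$ is $O(\delta)$, so the $C^k$-closeness in (a) and the preservation of negative sectional curvature follow immediately for $\delta$ small; the constraints $\delta \le \epsilon/L$ and $\delta < 1/(L+1)$ needed for (c) and (e) are then trivially compatible.
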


\begin{proof}
To lighten the notation, we will denote by $\gamma:=\gamma_k$ the geodesic whose length we want to slightly decrease. Let
$\mathcal S :=\{\gamma_i ~:~ i\neq k , \ell(\gamma_i)\leq L \}$ denote the finite collection of closed geodesics who are shorter than 
$\gamma$ (whose lengths should be left unchanged). Note that any $\eta \in \mathcal S$ is distinct from $\gamma$, hence 
$\gamma \cap \eta$ is a finite set of points. Now choose $p\in \gamma$ which does not lie on any of the $\eta \in \mathcal S$, and let 
$\delta$ be smaller than the distance from $p$ to all of the $\eta \in \mathcal S$, smaller than $\epsilon /2$, and smaller than the 
injectivity radius of $(M,g)$. We will modify the metric $g$ within the $g$--metric ball $B(p; \delta)$ centered at $p$ of radius $\delta$. 
This will immediately ensure that property (b) is satisfied. Since the $g$--geodesics $\eta \in \mathcal S$ lie in the complement 
of $B(p; \delta)$, they will also be $\overline{g}$--geodesics. This verifies property (d).

For (e), since the length spectrum of a closed negatively curved Riemannian manifold is discrete, there is a $\delta^\prime>0$ with 
the property that for any $\eta$ with $\ell(\eta) > L$, we actually have $(1-\delta ^\prime) \ell(\eta) > L$. By shrinking $\delta^\prime$ 
if need be, we can assume that $\delta^\prime < \epsilon$. We modify the metric on $B(p; \delta)$ so that, for any $v\in TB(p; \delta)$, 
we have 
\begin{equation}\label{conf}
(1-\delta^\prime){\norm{v}_{g}} \leq \norm{v}_{\overline{g}} \leq {\norm{v}_{g}}.
\end{equation}
Since $\delta^\prime < \epsilon$, the first statement in property (a) will follow. Moreover, if $\eta$ is any closed $g$--geodesic, and 
$\overline{\eta}$ is the $\overline{g}$--geodesic freely homotopic to $\eta$, then we have the inequalities:
\begin{align}
\overline{\ell}(\overline{\eta}) & = \int _{S^1} \norm{\overline{\eta} ' (t) }_{\overline{g}} dt \geq (1-\delta^\prime)\int _{S^1} \norm{\overline{\eta}'(t)}_{g} dt \\
& = (1-\delta^\prime) \ell(\overline{\eta})  \geq (1-\delta^\prime) \ell(\eta)
\end{align}
Inequality (2) follows by applying (\ref{conf}) point-wise, while inequality (3) comes from the fact that $\eta$ is the $g$--geodesic 
freely homotopic to the loop $\overline{\eta}$. Hence, by the choice of $\delta^\prime$, 
$\overline{\ell}(\overline{\eta}) \geq (1-\delta^\prime) \ell(\eta) >L$, confirming property (e).  Note that this exact same argument, applied
to $\gamma$, also establishes property (c).

To complete the proof, we are left with explaining how to modify the metric on $B(p; \delta)$ in order to ensure property (a),
and in particular equation (\ref{conf}). We start by choosing a very small  $\delta ^{\prime \prime} < \delta /2$,  
which is also smaller than the normal injectivity radius of $\gamma$. We will focus on an exponential normal 
$\delta ^{\prime \prime}$--neighborhood of the geodesic $\gamma$ near the point $p$ (we can reparametrize so that 
$\gamma(0)=p$). Choose an orthonormal basis $\{e_1, \ldots e_n\}$ at the point $\gamma(0)$, with $e_1 = \gamma ^\prime (0)$, 
and parallel transport along $\gamma$ to obtain an orthonormal family of vector fields $E_1, \ldots E_n$ along $\gamma$. 
The vector fields $E_2, \ldots , E_n$ provides us with a diffeomorphism between the normal bundle $N\gamma_k$ of 
$\gamma_k|_{(-\delta ^{\prime \prime}, \delta ^{\prime \prime})}$ and $(- \delta ^{\prime \prime}, \delta ^{\prime \prime}) \times \R^{n-1}$. 
Let $D \subset \R^{n-1}$ denote the open ball of radius $\delta ^{\prime \prime}$, and using the exponential map, we obtain a 
neighborhood $N$ of the point $p$ which is diffeomorphic to $(- \delta ^{\prime \prime}, \delta ^{\prime \prime}) \times D$. We 
use this identification to parametrize $N$ via pairs $(t, z) \in (- \delta ^{\prime \prime}, \delta ^{\prime \prime}) \times D$. First, observe 
that this neighborhood $N$ comes equipped with a natural foliation, given by the individual slices $\{t\} \times D$. This is a smooth 
foliation by smooth codimension one submanifolds, and assigning to each point $q\in N$ the unit normal vector (in the positive 
$t$--direction) to the leaf through $q$, we obtain a smooth vector field $V$ defined on $N$. We can (locally) integrate this vector 
field near any point $q=(t_0, z_0)\in N$ to obtain a well-defined function $\tau\colon N\rightarrow \R$, defined in a neighborhood 
of $q$ (with initial condition given by $\tau \equiv 0$ on the leaf through $q$). Observe that, along the geodesic $\gamma$, we have 
that $\tau(t, 0) = t$, but that in general, $\tau(t, z)$ might not equal $t$. In this (local) parametrization near any point $q\in N$, our 
$g$--metric takes the form 
\begin{equation}\label{old-metric}
g = d\tau^2 + h_{t},
\end{equation}
where $h_{t}$ is a Riemannian metric on the leaf $\{t\} \times D$. We now change this metric on $N$. Pick a monotone smooth 
function $f\colon [0, \delta^{\prime \prime}] \to [{1-\delta ^\prime},1]$, which is identically $1$ in a neighborhood of $\delta ^{\prime \prime}$, 
and is identically ${1-\delta ^\prime}$ in a neighborhood of $0$. Recall that we had the freedom of choosing $\delta^\prime$ as small 
as we want. By further shrinking $\delta^\prime$ if need be, we can also arrange for the smooth function $f$ to have all order derivatives 
very close to $0$. There is a continuous function $r\colon N\rightarrow [0, \delta ^{\prime \prime})$ given by sending a point to its distance 
from the geodesic $\gamma$. We define a new metric in the neighborhood $N$ which is given in local coordinates by:
\begin{equation}\label{new-metric}
\overline{g}= f(r) f(t) d\tau^2 + h_{t}
\end{equation}
where $r$ denotes the distance to the geodesic $\gamma$ (i.e. the distance to the origin in the $D$ parameter).

Let us briefly describe in words this new metric. We are shrinking our original metric $g$ in the directions given by the $\tau$ parameter. 
In a small neighborhood of the point $p$, the $\tau$ parameter vector (which coincides with $\gamma^\prime$ along $\gamma$) is shrunk 
by a factor of $1-\delta^\prime$. As you move away from $p$ in the $t$ and $r$ directions, the $\tau$ parameter vector is shrunk by a 
smaller and smaller amount ($f$ gets closer to $1$), until you are far enough, at which point the metric coincides with the $g$--metric. 
By the choice of $\delta^{\prime \prime}$, this neighborhood $N$ is entirely contained in $B(p; \delta)$, hence our new metric 
$\overline{g}$ coincides with the original one outside of $B(p, \delta)$. The fact that equation (\ref{conf}) holds is easy to see. At 
any point $x=(t, z) \in N$ we can decompose any given tangent vector $\vec v\in T_xM$ as $\vec v = v_\tau \frac{d}{d\tau} + \vec v_z$, 
with $v_\tau \in \R$ and $\vec v_z \in T_{t,z}\big(\{t\}\times D\big)$. The original $g$--length of $\vec v$ is given by 
$\norm{\vec v}_g^2 = v_\tau^2 + \norm{\vec v_z}_{h_{t}}^2$, while the new $\overline{g}$--length of $\vec v$ is given by 
$\norm{\vec v}_{\overline{g}}^2 = f(t)f(r) v_\tau^2 + \norm{\vec v_z}_{h_{t}}^2$. Now the fact that the function $f$ takes values in the 
interval $[1-\delta ^\prime , 1]$ yields equation (\ref{conf}) (which gives the first statement in property (a)).

We note that the curvature operator can be expressed as a continuous function of the Riemannian metric and its derivatives. The 
metrics $\overline{g}$ and $g$ only differ on $N$, where they are given by equations (\ref{old-metric}) and (\ref{new-metric}) respectively. 
However, the function $f$ was chosen to have all derivatives very close to $0$. It follows that the metrics $\overline{g}$ and $g$ are close, 
as are all their derivatives (giving the second statement in property (a)). Hence their curvature operators (as well as their sectional 
curvatures) will correspondingly be close. Since $g$ is negatively curved and $M$ is compact, by choosing the parameters small 
enough, we can ensure that $\overline{g}$ is negatively curved. This concludes the proof of Proposition \ref{perturb-near-geodesic}
\end{proof}

\begin{rem}
The reader might find it instructive to think through Proposition \ref{perturb-near-geodesic} in the special case where $(M, g)$ is
a closed hyperbolic manifold. Our perturbation is length non-increasing, and shortens at least one geodesic, so $\text{Vol}(M, \overline {g})<
\text{Vol}(M,g)$. Since the volume is a topological invariant for hyperbolic manifolds, we note that $\overline {g}$ is no longer hyperbolic -- 
the curvatures in the perturbed region {\it must} change, while the curvatures outside remain identically $-1$. By choosing our constants
small, we can nevertheless arrange for the curvatures and the volume of $(M, \overline{g}$ to be as close as we want to those of the original 
hyperbolic metric.
\end{rem}

\subsection{Perturbations with no arithmetic progressions}

Finally, we have the necessary ingredients to prove Theorem \ref{perturb-no-AP}.

\begin{proof}[Proof of Theorem \ref{perturb-no-AP}]
Given our negatively curved closed Riemannian manifold $(M,g)$, we will inductively construct a sequence of negatively curved 
Riemannian metrics $g_i$, starting with $g_0=g$. We will denote by $\gamma^{(i)}_{k}$ the $k^{th}$ shortest primitive geodesic 
in the $g_i$--metric. To alleviate notation, let us denote by $\mathcal L_i$ the primitive length spectrum of $(M,g_i)$, which we 
think of as a non-decreasing function $\mathcal L_i\colon \N \to \R^+$. In particular, $\mathcal L_i(k)= \ell _i\big(\gamma^{(i)}_{k}$\big), 
the length of $\gamma^{(i)}_k$ in the $g_i$--metric. We will be given an arbitrary sequence $\{\epsilon_n\}_{n\in \N}$ satisfying 
$\lim \epsilon_n =0$.
For each $n \in \N$, the sequence of metrics $g_i$ will then be chosen to satisfy the following properties:
\begin{enumerate}
\item For all $i\geq n$, the functions $\mathcal L_i$ coincide on $\{1, \ldots, n\}$.
\item Each subset $\mathcal L_n(\{1, \ldots ,n\}) \subset \R^+$ contains no $3$--term arithmetic progressions.
\item Each $g_{n+1}\equiv g_n$ on the complement of a closed set $B_n$, where each $B_n$ is a 
(contractible) metric ball in the $g$--metric of radius strictly smaller than $\epsilon _n$, and the sets $B_n$ are pairwise disjoint.
\item On the balls $B_n$, we have that for all vectors $v\in TB_n$, $(1-\epsilon_n) \norm{v}_{g_n} \leq \norm{v}_{g_{n+1}} \leq \norm{v}_{g_n}$. Moreover, for each $n\in \N$, all derivatives of the metric ${g}_{n+1}$ are close to the corresponding
derivatives of the metric $g_n$.
\item For each $i>n$, we have that $\gamma^{(n)}_i \setminus \bu_{j=1}^n B_j \neq \emptyset$.
\item The sectional curvatures of the metrics $g_n$ are uniformly bounded away from zero, and uniformly bounded from below.
\end{enumerate}

\underline{{\bf Assertion:}} There is a sequence of metrics $g_n$ ($n\in \N$) satisfying properties (1)--(6).

Let us for the time being assume the {\bf Assertion}, and explain how to deduce Theorem \ref{perturb-no-AP}. The {\bf Assertion} 
provides us with a sequence of negatively curved Riemannian metrics on the manifold $M$. By choosing a sequence 
$\{\epsilon_n\}_{n\in \N}$ which decays to zero fast enough, it is easy to verify (using (3) and (4)) that these metrics converge 
uniformly to a limiting Riemannian metric $g_\infty$ on $M$. Moreover, this metric is negatively curved (see (6)), and has the 
property that $\Cal{L}_p(M, g_\infty)$ has no arithmetic progression. To see that there are no arithmetic progressions, we just 
need the following:

\underline{{\bf Claim:}} Choose any homotopically non-trivial loop $\gamma$ on $M$. Then there exists a $k$ such that the representative
of $\gamma$ in the $g_n$-metric is the $k^{th}$ shortest geodesic (for all sufficiently large $n$).

Assuming the {\bf Claim}, we show that $\Cal{L}_p(M, g_\infty)$ has no arithmetic progression. Given any three free homotopy 
classes of loops, the claim implies that for sufficiently large $n$, the $g_n$ geodesics representing these classes are the 
$i^{th}, j^{th}$, and $k^{th}$ shortest geodesics, where $i, j, k$ are independent of $n$. Property (2) ensures that the three real 
numbers $\mathcal L_n(i), \mathcal L_n(j), \mathcal L_n(k)$ do not form a $3$--term arithmetic progression. Property (1) ensures 
this property for the metrics $g_m$, for all $m\geq n$, and hence for the limiting metric $g_\infty$. It follows that $\Cal{L}_p(M, g_\infty)$ 
has no arithmetic progression.

\begin{proof}[Proof of \textbf{Claim}]
We proceed by contradiction. Note that, in our sequence of metrics $g_n$, properties (3) and (4) ensure
that the length of the $g_n$-geodesic in the given homotopy class can only {\it decrease} as $n$ goes to infinity. Let $L$ denote
the length of the $g_0$-geodesic in the homotopy class. If the claim fails, then for each $k$, we can find a corresponding metric
in our sequence, in which there are at least $k$ geodesics of length shorter than the geodesic in our given homotopy class -- and
hence shorter than $L$. This implies that for the $g_\infty$--metric on $M$, we have infinitely many geometrically distinct primitive 
geodesics whose lengths are uniformly bounded above by $L$. On the other hand, property (5) implies that $g_\infty$ has strictly 
negative curvature, so $\Cal{L}_p(M, g_\infty)$ must be a discrete multiset in $\R$. This contradiction establishes the {\bf Claim}. 
\end{proof}

\begin{proof}[Proof of \textbf{Assertion}]
By induction, let us assume that $g_n$ is given, and let us construct $g_{n+1}$. In order to lighten the notation, we will suppress the
superscripts on the geodesics $\gamma_i^{(n)}$ -- all geodesics in the rest of this proof will be with respect to the $g_n$-metric.

We consider the set $\mathcal L_n( \{1, \ldots, n+1\}) \subset \R^+$, and check whether or not it contains any arithmetic progression. 
If it does not, we set $g_{n+1} \equiv g_n$, $B_{n+1} = \emptyset$, and we are done. If it does contain an arithmetic progression, then 
from the induction hypothesis we know that it is necessarily a $3$--term arithmetic progression with last term  given by $\mathcal L_n(n+1)$, 
the length of the $g_n$--geodesic $\gamma_{n+1}$. From property (5), the complement $\gamma_{n+1} \setminus \bu _{j=1}^n B_j$ 
is a non-empty set and can be viewed as a collection of open subgeodesics of $\gamma_{n+1}$. As each of the sets 
$\gamma_{n+1} \cap \gamma_j$ ($j\neq n$) is finite, we can choose a point $p$ on $\gamma_{n+1} \setminus \bu _{j=1}^n B_j$ 
which does not lie on any of the geodesics $\gamma_j$ for $j\leq n$. We choose a parameter 
$\epsilon^\prime < \epsilon _{n}$, small enough so that the $\epsilon^\prime$--ball centered at 
$p$ is disjoint from $\pr{\bu _{j=1}^n B_j} \cup \pr{\bu _{j=1}^n \gamma_j}$. Note that, in view of property (3), on the complement of 
$\bu _{j=1}^n B_j$, we have that $g_n\equiv g_{n-1} \equiv \cdots \equiv g_0$. In particular, for $\epsilon^\prime$ small, the metric 
ball centered at $p$ will be independent of the metric used. Shrinking $\epsilon^\prime$ further if need be, we can apply Proposition 
\ref{perturb-near-geodesic} (with a parameter $\epsilon< \epsilon ^\prime$ to be determined below), obtaining a metric $g_{n+1}$ 
which differs from $g_n$ solely in the $\epsilon^\prime$--ball centered at $p$. We define $B_{n+1}$ to be the $\epsilon^\prime$--ball 
centered at $p$, and now proceed to verify properties (1)--(6) for the resulting metric.

\underline{\bf Property (1):} We need to check that the resulting length function $\mathcal L_{n+1}$ satisfies 
$\mathcal L_{n+1}(i) = \mathcal L_n(i)$ when $i\leq n$. However, this equality follows from statement (d) in 
Proposition \ref{perturb-near-geodesic}. 

\underline{\bf Property (2):} In view of property (1), we have an equality of sets 
$\mathcal L_{n+1}(\{1, \ldots , n\}) = \mathcal L_{n}(\{1, \ldots ,n\})$. By the inductive hypothesis, we know that there is no $3$--term 
arithmetic progression in this subset. Since the set $\mathcal L_{n+1}(\{1, \ldots , n\})$ is finite, there are only finitely many real numbers 
which can occur as the third term in a $3$--term arithmetic progression whose first two terms lie in $\mathcal L_{n+1}(\{1, \ldots , n\})$; 
let $T$ denote this finite set of real numbers, and observe that by hypothesis, $L:=\mathcal L_{n}(n+1) \in T$. Since $T$ is finite, we can 
choose $\epsilon < \epsilon ^\prime$ small enough so that we also have $[L -\epsilon, L) \cap T =\emptyset$. Then it follows from 
statements (c) and (e) in our Proposition \ref{perturb-near-geodesic} that $L-\epsilon \leq\mathcal L_{n+1}(n+1) < L$ and hence 
$\mathcal L_{n+1}(n+1) \not\in T$. Since $\mathcal L_{n+1}(n+1)$ cannot be the third term of an arithmetic progression, we conclude 
that the set $\mathcal L_{n+1}(\{ 1, \ldots , n+1\})$ contains no 3--term arithmetic progressions, verifying property (2).

\underline{\bf Property (3):} This follows from our choice of $\epsilon^\prime < \epsilon _n$ and point $p$, and property (b) in 
Proposition \ref{perturb-near-geodesic}.

\underline{\bf Property (4):} This follows from the corresponding property (a) in Proposition \ref{perturb-near-geodesic} 
(recall that $\epsilon < \epsilon_n$).

\underline{\bf Property (5):} This follows readily from property (3), which implies that the individual
$B_j$ are the path connected components of the set $\bu _{j=1}^n B_j$. So if the closed geodesic $\gamma_i$ was entirely 
contained in $\bu _{j=1}^n B_j$, it would have to be contained entirely inside a single $B_j$. However, such a containment is 
impossible, as $\gamma_i$ is homotopically non-trivial in $M$, while each $B_j$ is a contractible subspace of $M$.

\underline{\bf Property (6):} This is a consequence of property (4), as the curvature operator varies
continuously with respect to changes in the metric and its derivatives. By choosing the sequence 
$\{\epsilon_n\}_{n\in \N}$ to decay to zero fast enough, we can ensure that the change in sectional curvatures 
between successive $g_n$--metrics is slow enough to be uniformly bounded above and below by a pair of 
negative constants.

This completes the inductive construction required to verify the {\bf Assertion}.
\end{proof} 

Now that we've proven the {\bf Assertion}, the proof of Theorem \ref{perturb-no-AP} is complete. \end{proof}

\begin{rem}\label{perturb-no-relation}
Let $\mathcal R$ be an $r$--ary relation ($r\geq 2$) on the reals $\R$, having the property that if 
$(x_1, x_2, \ldots ,x_r)$ in $\mathcal R$, then $x_1\leq x_2 \leq \cdots \leq x_r$. Assume the relation $\mathcal R$ also has the property 
that, given any $x_1\leq x_2 \leq \cdots \leq x_{r-1}$, the set $\set{z ~:~ (x_1, \ldots , x_{r-1}, z)\in \mathcal R}$ is {\it finite}. Then the 
reader can easily see that the proof given above for Theorem \ref{noAP-G-delta-dense} also shows that there is a dense set of negatively 
curved metrics $g$ with the property that the primitive length spectrum $\mathcal L_p(M,g)$ contains no $r$--tuple satisfying the relation 
$\mathcal R$. In the special case where there exists a continuous function $F\colon \R ^r \to \R$ with the property that $(x_1, \ldots , x_r)$ 
is in $\mathcal R$ if and only if $x_1\leq x_2 \leq \cdots \leq x_r$ satisfies $F(x_1, \ldots , x_r)=0$, one also has that this dense set of 
negatively curved metrics is a $G_\delta$ set. Our Theorem \ref{noAP-G-delta-dense} corresponds to the $3$--ary relation given by 
zeroes of the linear equation $F(x, y, z) = x - 2y + z$. For another example, consider the $2$--ary relation corresponding to the zeroes of 
the linear equation $F(x, y) = x - y$. In this setting, we recover a well-known result of Abraham \cite{Abraham} -- that there is a dense 
$G_\delta$ set of negatively curved metrics on $M$ which have no multiplicities in the primitive length spectrum.
\end{rem}

\section{Almost arithmetic progressions are generic}\label{section:AAP-generic}

In this section, we give two proofs that almost arithmetic progressions can always be found in the
primitive length spectrum of negatively curved Riemannian manifolds.

\subsection{Almost arithmetic progression - the dynamical argument}

The first approach relies on the dynamics of the geodesic flow. Recall that closed geodesics in $M$ correspond to periodic orbits of the geodesic flow $\phi$ defined on the unit tangent bundle $T^1M$. In the case where $M$ is a closed negatively curved Riemannian manifold, it is well known that the geodesic flow is Anosov (see for instance \cite[\S 17.6]{hasselblatt-katok}). Our result is then a direct consequence of the following:

\begin{prop}\label{APs-Anosov-flows}
Let $X$ be a closed manifold supporting an Anosov flow $\phi$. Then for any $\epsilon >0$ and natural number
$k\geq 3$, there exists a $k$--term $\epsilon$--almost arithmetic progression $\tau_1< \ldots < \tau_k$ and 
corresponding periodic points $z_1, \ldots , z_k$ in $X$ with the property that each $z_i$ has minimal 
period $\tau_i$. 
\end{prop}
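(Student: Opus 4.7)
The plan is to invoke Bowen's specification property, which holds for topologically mixing Anosov flows---in particular for the geodesic flow on closed negatively curved manifolds (see \cite[Section 18.3]{hasselblatt-katok}). Specification supplies, for every $\delta>0$, a constant $T_0 = T_0(\delta) > 0$ such that any finite orbit segment can be $\delta$-shadowed by a periodic orbit whose period exceeds the length of that segment by at most $T_0$. This will let us manufacture periodic orbits whose periods cluster near arbitrary integer multiples of a fixed length.

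First I would fix any primitive periodic orbit $\gamma_0$ of primitive period $L_0 > 0$ (such orbits exist since Anosov flows have dense periodic orbits). Applying specification to the orbit segment that traces $\gamma_0$ exactly $n$ times yields, for each large integer $n$, a periodic point $y_n$ of period $T_n \in [nL_0,\, nL_0 + T_0]$ whose orbit $\delta$-shadows $\gamma_0^n$. One then has to argue that for large $n$ the \emph{minimal} period $\tau_n$ of $y_n$ is itself within $T_0$ of $nL_0$: if $\tau_n$ were much smaller than $T_n$, then expansiveness of the Anosov flow would force $y_n$ to lie on a short orbit, contradicting the fact that $y_n$ traces a long $\delta$-shadow of $\gamma_0^n$ that is not a repetition of a shorter loop.

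Given $\epsilon > 0$ and $k \geq 3$, I fix $\delta$ small, choose $N \in \N$ so large that $2T_0/(NL_0 - T_0) < \epsilon$, and then apply the construction with $n = iN$ for $i = 1, \ldots, k$ to obtain periodic points $z_i$ of minimal periods $\tau_i \in [iNL_0,\, iNL_0 + T_0]$. The successive differences then satisfy
\[
NL_0 - T_0 \;\leq\; \tau_i - \tau_{i-1} \;\leq\; NL_0 + T_0,
\]
and hence
\[
\left| \frac{\tau_i - \tau_{i-1}}{\tau_j - \tau_{j-1}} - 1 \right| \;\leq\; \frac{2T_0}{NL_0 - T_0} \;<\; \epsilon,
\]
producing the required $k$-term $\epsilon$-almost arithmetic progression of minimal periods.

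The main obstacle is the primitivity upgrade in the second paragraph: Bowen's construction naturally outputs a periodic orbit of prescribed period $T_n$, but its \emph{minimal} period could a priori be a proper divisor. The cleanest way out is the expansiveness argument indicated above, although one could instead invoke the Anosov closing lemma (or, more heavy-handedly, the Parry--Pollicott prime orbit theorem, which would directly furnish primitive periods in each window $[iT, iT+1]$ once $T$ is large). I expect the authors to favor the specification approach since they explicitly advertise it as the dynamical proof, reserving the counting-based argument for the second proof of Theorem \ref{MainAAPGen}.
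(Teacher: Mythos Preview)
Your overall architecture matches the paper's: invoke specification, build periodic orbits indexed by $n$ whose periods are close to $n$ times a fixed length, then subsample at multiples of a large $N$ to get the $\epsilon$--almost arithmetic progression. The final estimate on successive differences is essentially the same computation the authors carry out.

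The gap is exactly where you flag it, and your proposed fix does not work. You shadow $\gamma_0^n$, i.e.\ a \emph{single} primitive orbit traversed $n$ times. But $\gamma_0^n$ \emph{is} a repetition of a shorter loop, so there is nothing preventing specification from returning a periodic point whose minimal period is close to $L_0$ rather than $nL_0$; your sentence ``contradicting the fact that $y_n$ traces a long $\delta$-shadow of $\gamma_0^n$ that is not a repetition of a shorter loop'' asserts precisely the opposite of what is true. Expansiveness alone cannot rescue this: expansiveness compares two full orbits for all time, whereas here you only control a finite segment, and the most you could extract is that $y_n$ lies near $\gamma_0$ --- which is consistent with $\tau_n \approx L_0$, not a contradiction.

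The authors' device to force minimality is to shadow a \emph{concatenation of two distinct} periodic orbits $\mathcal O_1$ (period $A$) and $\mathcal O_2$ (period $B$), with $\delta$ chosen so small that their $\delta$--neighborhoods are disjoint. Specification then produces $x_n$ of period $s_n \approx A + nB + 2d$ whose orbit spends the first block near $\mathcal O_1$ and the long second block near $\mathcal O_2$. If $s_n$ were not minimal, some point of the orbit would have two preimages under the time parametrization, one in each block, placing it simultaneously in both $\delta$--neighborhoods --- impossible by the disjointness. This ``marker'' orbit $\mathcal O_1$ is exactly the ingredient your single-orbit construction lacks.
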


Before establishing this result, we recall that the Anosov flow on $X$ has the \textbf{specification property} (see
\cite[Section 18.3]{hasselblatt-katok} for a thorough discussion of this notion). This means that, given any $\delta >0$, there exists a real number $d>0$ with the following property. Given the following specification data:
\begin{itemize}
\item any two intervals $[0, b_1]$ and $[b_1+ d, b_2]$ in $\R$ (here $b_1, b_2$ are arbitrary positive 
real numbers satisfying $b_1 + d < b_2$),
\item a map $P\colon [0, b_1] \cup [b_1 +d, b_2]  \to X$ such that
$\phi^{t_2-t_1}\big(P(t_1)\big)=P(t_2)$ holds whenever $t_1, t_2 \in [0, b_1]$ and whenever 
$t_1, t_2 \in [b_1+d, b_2]$ (so that $P$ restricted to each of the two intervals defines a pair of $\phi$--orbits),
\end{itemize}
one can find a periodic point $x$, of period $s$, having the property that for all $t\in [0, b_1] \cup [b_1+ d, b_2]$ we have
$d\big(\phi^t(x) , P(t)\big) < \delta$ (so the periodic orbit $\delta$--shadows the two given pairs of orbits). Moreover, the period $s$ satisfies $\big|s -(b_2+d)\big| < \delta$ (though $s$ might not be the minimal period of the point $x$). We now use this specification property to establish the proposition.

\begin{proof}
We start by choosing a pair of distinct periodic orbits $\mathcal O_1$, $\mathcal O_2$ for the flow $\phi$, with minimal periods $A, B$ respectively (existence of distinct periodic orbits is a consequence of the Anosov property). Since the closed orbits are distinct, there is a $\delta$ with the property that the $\delta$--neighborhoods of the two orbits are disjoint. Corresponding to this $\delta$, we let $d>0$ be the real number provided by the specification property. We fix a pair of points $p_i \in \mathcal O_i$, and now explain how to produce some new periodic points.

Given an $n\in \N$, we consider the two intervals $[0, A]$ and $[A+d, nB + A + d]$ in $\R$. 
We define a map 
\[ P\colon [0, A] \cup [A+d, nB + A + d] \lra X \quad \text{by} \quad P(t)= \begin{cases} \phi^t(p_1) & t\in [0, A] \\ \phi^{t - A - d}(p_2) & t\in  [A+d, nB + A + d].
\end{cases} \]
From the specification property, there exists $x_n\in X$ and $s_n$ with $\phi^{s_n}(x_n)=x_n$
and $\big|s_n - (nB + A + 2d) \big| < \delta$ such that $d\big(\phi^t(x_n) , P(t)\big) < \delta$ holds for all $t$ in $[0, A] \cup [A+d, nB + A + d]$. We claim that if $n > (A + 2d + \delta)/B$, then $s_n$ is the minimal period of the point $x_n$. Indeed, under this hypothesis, the subinterval $[A+d, nB + A + d]$ is at least half the length of the period $s_n$. So if $s_n$ were not minimal, one could find $t_1 \in [0, A]$ and $t_2 \in [A + d, nB + A + d]$ with the property that $y:=\phi^{t_1}(x) = \phi^{t_2}(x)$. However, the shadowing property implies that $d\big(y, P(t_i)\big) = d\big(\phi^{t_i}(x), P(t_i)\big) < \delta$, which tells us that $y$ lies in the $\delta$--neighborhood of both sets $\mathcal O_1 = P\big([0, A]\big)$ and $\mathcal O_2 = P\big([A+d, nB + A + d]\big)$. This containment plainly contradicts the choice of $\delta$. We conclude that $s_n$ is indeed the minimal period of the point $x_n$. Now that we have found a sequence $\{x_n\}$ of periodic points, with minimal periods $\{s_n\}$ (when $n$ is sufficiently large), it is easy to find a $k$--term $\epsilon$--almost arithmetic progression. First, pick the integer $N$ to satisfy the inequality $N > \text{max} \left\{ \frac{4\delta + 2\delta \epsilon}{B\epsilon} , \frac{A + 2d + \delta}{B} \right\}$.  Setting $z_i := x_{iN}$ and $\tau_i:= s_{iN}$, we claim that the real numbers $\tau_1, \ldots , \tau_k$ forms the desired almost arithmetic progression. Indeed, the condition $N> \frac{A + 2d + \delta}{B}$ ensures that $\tau_i$ is the minimal period of the corresponding $x_i$. From the specification property, each $\tau_i$ satisfies the inequality $\abs{\tau_i - (iNB + A + 2d)}< \delta$. An elementary calculation shows that the ratio of any successive difference satisfies
\[ 1 - \epsilon < 1 - \frac{4\delta}{NB + 2\delta} < \left|\frac{\tau_{i+1} - \tau_i}{\tau_{j+1} - \tau_j}\right| 
< 1+ \frac{4\delta}{NB - 2\delta} < 1+ \epsilon \]
where the outer inequalities follow from $N>\frac{4\delta + 2\delta \epsilon}{B\epsilon}$. \end{proof}

\begin{rem}
There exist examples of Anosov flows that are distinct from the geodesic flow on the unit tangent bundle of a negatively curved manifold. For example, Eberlein \cite{E} has constructed an example of a closed non-positively curved Riemannian manifolds whose geodesic flow is Anosov, and which contain ``large'' open sets where the sectional curvature is identically zero. There are also examples of Anosov flows that do {\it not} come from geodesic flows, e.g. the suspension of an Anosov diffeomorphism on an odd dimensional manifold. 
\end{rem}

\begin{rem}
The proof of Proposition \ref{APs-Anosov-flows} only used the fact that Anosov flows on a compact manifold satisfy the specification property. The argument in the proof also works in a slightly more general setting, for flows that satisfy the {\it weak specification property}. In the specification property, the constant $d$ is the {\it transition time} for the orbit to move from shadowing the first orbit segment to shadowing the second orbit segment. The crucial point is that $d$ depends on $\epsilon$, but not on the choice of the orbit segments to be shadowed. In the weak specification property, one lets the transition times depend on the choice of orbit segments, but constrain them to be bounded above by a constant $D$ (which depends on $\epsilon$). It is easy to adapt the proof of Proposition \ref{APs-Anosov-flows} to see that, if $X$ is a compact space with a flow satisfying the weak specification property, then for any $k$ and $\epsilon$, one can find $k$ periodic points whose orbit lengths form a $k$--term $\epsilon$--almost arithmetic progression. In \cite{CLT}, Constantine, Lafont, and Thompson show that the geodesic flow on a compact locally CAT(-1) space satisfies the weak specification property (it is unknown whether these spaces satisfy the specification property). It follows that the primitive length spectrum for these spaces also have arbitrarily long $\epsilon$--almost arithmetic progressions for all $\epsilon >0$.
\end{rem}

\subsection{Almost arithmetic progression - the density argument}

An alternate route for showing that the primitive length spectrum $\Cal{L}_p(M,g)$ of a negatively curved Riemannian manifold has arbitrarily long almost arithmetic progressions is to exploit Margulis' work on the growth rate of this sequence. For a multiset $S\subset \R^+$ which is \textbf{discrete}, in that any bounded interval contains only finitely many elements of $S$, we define the associated \textbf{counting function}  to be $S(n):= \abs{ \set{ x\in S~:~x \leq n}}$. 

\begin{prop}\label{AAPGen}
If $S(x)$ has the property that there is some $t>0$ such that $\lim_{x\to \infty} \frac{S(x-t)}{S(x)}$ exists and is not equal $1$, then $S$ has almost arithmetic progressions.
\end{prop}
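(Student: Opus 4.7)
The plan is to exploit the density hypothesis directly, without invoking Szemer\'edi's theorem, by showing that $S$ is so dense at infinity that every length-$t$ bin eventually meets $S$. Given this, an $\epsilon$-almost arithmetic progression of length $k$ can be built by picking one element of $S$ from each of $k$ equally-spaced bins of width $t$, with the spacing chosen large enough that the $t$-roundoff inside each bin is negligible compared to the common difference. The one slightly delicate step is passing from the ratio condition to a genuine ``bins eventually fill up'' statement; it dissolves with a short calculation.

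Since $S$ is non-decreasing, $S(x-t)/S(x) \leq 1$, so the limit $c$ lies in $[0,1]$, and the hypothesis forces $c \in [0,1)$. Iterating the convergence gives $S(x)/S(x-jt) \to c^{-j}$, so $S(x) \to \infty$ at least geometrically in $x/t$. Writing
\[ S(x) - S(x-t) \,=\, S(x)\!\left(1 - \frac{S(x-t)}{S(x)}\right), \]
the right-hand side tends to $\infty$ as $x \to \infty$, since the first factor blows up while the second tends to $1 - c > 0$. In particular, there is some $n_0 \in \N$ such that the half-open bin $[nt, (n+1)t)$ meets $S$ for every integer $n \geq n_0$.

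Now fix $\epsilon > 0$ and $k \geq 2$. Choose an integer $d > 1 + 2/\epsilon$, any integer $N \geq n_0$, and for each $j = 0, 1, \ldots, k-1$ select some
\[ x_{j+1} \;\in\; S \,\cap\, \big[(N+jd)t,\, (N+jd+1)t\big). \]
Since $d \geq 2$, the $k$ bins are pairwise disjoint and correctly ordered, so $x_1 < x_2 < \cdots < x_k$. Writing $x_{j+1} = (N+jd)t + e_{j+1}$ with $e_{j+1} \in [0,t)$, each consecutive difference lies in $\big((d-1)t,\, (d+1)t\big)$, so the ratio of any two consecutive differences lies in the interval $\big((d-1)/(d+1),\, (d+1)/(d-1)\big)$. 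The choice $d > 1 + 2/\epsilon$ forces this interval into $(1-\epsilon,\, 1+\epsilon)$, so $(x_1, \ldots, x_k)$ is a $k$-term $\epsilon$-almost arithmetic progression in $S$, completing the proof.
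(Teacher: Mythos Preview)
Your proof is correct and follows essentially the same approach as the paper: partition $\R^+$ into bins of width $t$, show that all sufficiently large bins meet $S$, and then select one point from each of $k$ bins spaced $d$ apart with $d>1+2/\epsilon$. The only minor difference is that you establish the stronger fact $S(x)-S(x-t)\to\infty$ directly, whereas the paper argues by contradiction that infinitely many empty bins would yield a subsequence along which $S(x-t)/S(x)=1$, contradicting the hypothesis on the limit.
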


\begin{proof}
Given an $\epsilon >0$, we want to find an $\epsilon$--almost arithmetic progression of some given length $N$. Let us decompose $\R^+ = \bigcup _{k\in \N} \big( (k-1)t, kt \big]$, and form a subset $A \subset \N$ via $A := \{ k ~:~ S \cap \big((k-1)t, kt\big] \neq \emptyset \}$. We now argue that the set $A\subset \N$ is the complement of a finite subset of $\N$. If not, we could find an infinite sequence $k_i \subset \N$ with $k_i \not \in A$. From the definition of $A$, we have that for each of these $k_i$, the set $S\cap \big((k_i-1)t, k_i t\big]$ is empty. In terms of the counting function, this gives $S\big((k_i-1)t\big)= S(k_it)$. Now we divide
by $S(k_it)$ and take the limit, giving $\lim _{i\to \infty} \frac{S(k_it-t)}{S(k_it)} = 1$. However, this contradicts the fact that the limit $\lim_{x\to \infty} \frac{S(x-t)}{S(x)}$ exists and is not equal to $1$. So $\N \setminus A$ is a finite set, as desired. Next we choose an $m$ sufficiently large so that all integers greater than or equal to $m$ lie in the set $A$, and moreover $1+ \frac {2}{\epsilon} < m$. Consider the sequence of natural numbers $\{ m, 2m, \ldots Nm\}$. Since each of these natural numbers
lies in the set $A$, we can choose numbers $x_j \in S\cap \big((jm-1)t, (jm)t\big]$, giving us a sequence of
numbers $x_1 < x_2 < \cdots <x_N$ in the set $S$. We claim that this sequence forms an $\epsilon$--almost arithmetic progression of length $N$. It suffices to estimate the ratio of the successive differences. Note that for any index $j$, we have the obvious estimate on the difference $(m-1)t< \abs{x_{j+1} - x_{j}}<(m + 1)t$. Looking at the ratio between any two such successive differences, we obtain $1-\epsilon< \frac {m-1}{m+1}<\frac{\abs{x_{i+1} - x_{i}}}{\abs{x_{j+1} - x_{j}}}< \frac {m+1}{m-1}<1+\epsilon$, where the two outer inequalities follow from the fact that $1+ \frac {2}{\epsilon} < m$. This completes the proof of the proposition.
\end{proof}

A celebrated result of Margulis \cite{Margulis} establishes that, for a closed negatively curved manifold, the counting function for the primitive length spectrum has asymptotic growth rate $S(x) \sim \frac {e^{hx}}{hx}$,  where $h>0$ is the topological entropy of the geodesic flow on the unit tangent bundle. It is clear that, for any $t>0$, we have $\lim_{x\to \infty} \frac{S(x-t)}{S(x)} = \lim _{x\to \infty} \frac {e^{h(x-t)}hx}{e^{hx}h(x-t)}=e^{-ht}$, which is clearly not equal to $1$ since both $h>0, t>0$. In particular, Margulis' work in tandem with Proposition \ref{AAPGen} yields a second proof of Theorem \ref{MainAAPGen}. 

\begin{rem}
Margulis' thesis actually establishes the asymptotics for the number of periodic orbits of Anosov flows.  Hence, appealing to Margulis, one can recover Proposition \ref{APs-Anosov-flows} as a special case of Proposition \ref{AAPGen}. We chose to still include our proof of Proposition \ref{APs-Anosov-flows} for three reasons. First, it is relatively elementary, using only the specification property for Anosov flows, rather than the sophisticated result in Margulis' thesis. Secondly, it is constructive, allowing us to concretely ``see'' the sequence of periodic orbits whose lengths form the desired almost arithmetic progression. Thirdly, Margulis' asymptotics are not known to follow directly from the specification (or weak specification) property, so the method of proof of Proposition \ref{APs-Anosov-flows} could cover examples not addressed by Proposition \ref{AAPGen}.
\end{rem}

\section{Arithmetic orbifolds}\label{ArithSec}

In this section, we study the property of having genuine arithmetic progressions in the primitive length spectrum. We first show that this property is invariant under covering maps. Next, we prove that certain arithmetic manifolds have arithmetic progressions in their primitive length spectrum.

\subsection{Commensurability invariance}

\begin{prop}\label{CommInv}
Given a finite orbifold cover $(\overline{M}, \overline{g})$ of an orbifold $(M, g)$ with covering map 
$p \colon \overline{M}\rightarrow M$, the following two statements are equivalent:
\begin{enumerate}
\item[(a)] The primitive length spectrum $\Cal{L}_p(M, g)$ has arithmetic progressions.
\item[(b)] The primitive length spectrum $\Cal{L}_p(\overline{M}, \overline{g})$ has arithmetic progressions.
\end{enumerate}
\end{prop}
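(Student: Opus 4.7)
The plan is to exploit the simple observation that primitive geodesic lengths in the cover and in the base differ by a bounded integer factor, and then apply van der Waerden's theorem to convert arbitrarily long arithmetic progressions in one length spectrum into arbitrarily long arithmetic progressions in the other.

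First I would establish the length correspondence. Let $d$ denote the degree of $p$, so that $\pi_1(\overline{M})$ sits as an index-$d$ subgroup of $\pi_1(M)$. A primitive closed orbifold geodesic $\gamma$ in $M$ corresponds to a primitive conjugacy class $[\alpha]$ in $\pi_1(M)$; the smallest $m\geq 1$ with $\alpha^m\in\pi_1(\overline{M})$ is exactly the length of the orbit of the coset $\pi_1(\overline{M})$ under left $\alpha$-multiplication on $\pi_1(M)/\pi_1(\overline{M})$, and so $m\leq d$. One verifies that $\alpha^m$ is primitive in $\pi_1(\overline{M})$, which produces a primitive closed orbifold geodesic $\overline{\gamma}$ in $\overline{M}$ of length $m\,\ell(\gamma)$. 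Running the construction backwards, each primitive $\overline{\gamma}$ in $\overline{M}$ of length $\overline{\ell}$ projects to an $m$-fold cover of a unique primitive $\gamma$ in $M$ with $\ell(\gamma)=\overline{\ell}/m$ and $m\leq d$.

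With the correspondence in hand, I would invoke van der Waerden's theorem: for every $d,k\in\N$ there is an integer $W(d,k)$ such that every $d$-coloring of $\{1,\ldots,W(d,k)\}$ contains a monochromatic $k$-term arithmetic progression. To deduce (b) from (a), fix $k$, set $N=W(d,k)$, and take an $N$-term arithmetic progression $\ell_1<\cdots<\ell_N$ in $\Cal{L}_p(M,g)$. For each index $j$, choose an integer $m_j\in\{1,\ldots,d\}$ with $m_j\ell_j\in\Cal{L}_p(\overline{M},\overline{g})$ via the correspondence, and color $\{1,\ldots,N\}$ by $j\mapsto m_j$. Van der Waerden produces indices $j_1<\cdots<j_k$ in arithmetic progression all sharing a common color $m^*$, so that $m^*\ell_{j_1}<\cdots<m^*\ell_{j_k}$ is a $k$-term arithmetic progression inside $\Cal{L}_p(\overline{M},\overline{g})$. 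The converse direction (b) $\Rightarrow$ (a) is perfectly symmetric: color each index $j$ in an $N$-term progression of $\Cal{L}_p(\overline{M},\overline{g})$ by the integer $m_j\in\{1,\ldots,d\}$ for which $\overline{\ell}_j/m_j\in\Cal{L}_p(M,g)$, and extract a monochromatic sub-progression in the same way.

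The main technical obstacle I expect to encounter is verifying the length correspondence cleanly at the stated level of orbifold generality. Specifically, one needs that the minimal lift $\alpha^m$ of a primitive element of $\pi_1(M)$ is itself primitive in the index-$d$ subgroup, and symmetrically that a primitive root in $\pi_1(M)$ of a primitive element of $\pi_1(\overline{M})$ exists and is unique. In the torsion-free setting (for instance, negatively curved manifolds, where centralizers of infinite-order elements are cyclic) this is essentially automatic; in the orbifold setting one must appeal to the definition of primitive adopted in the paper (infinite order and not a proper power) and argue carefully that orbifold torsion in the centralizer of an infinite-order element cannot destroy uniqueness of the primitive root. This is the only step where something beyond formal manipulation is required.
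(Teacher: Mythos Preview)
Your proposal is correct and follows essentially the same approach as the paper: establish that primitive geodesics in the base and cover correspond with lengths differing by an integer factor bounded by the degree $d$, then apply van der Waerden's theorem to a $d$-coloring of the indices of a long arithmetic progression to extract a monochromatic (hence arithmetic) sub-progression in the other spectrum. The paper treats the primitivity of lifts and of projected roots just as informally as you do, simply asserting it as a ``simple observation,'' so your flagged technical concern about the orbifold case is not something the paper resolves more carefully.
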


\begin{proof}
We start by making a simple observation. For a closed curve $\gamma\colon S^1\rightarrow M$, we call a curve 
$\overline{\gamma}\colon S^1\rightarrow \overline{M}$ a lift of $\gamma$ if there is a standard covering map 
$q \colon S^1\rightarrow S^1$ (given by $z\mapsto z^n$) with the property that 
$\gamma\circ q \equiv p \circ \overline{\gamma}$. If $\gamma$ is a primitive geodesic in $M$, we observe that all of its 
lifts $\overline{\gamma}$ to $\overline{M}$ are also primitive geodesics. If $d$ is the degree of the cover 
$p\colon \overline{M} \rightarrow M$, then the lift $\overline{\gamma}$ will always have length that is an integral multiple 
of $\gamma$. Moreover, $1\leq \ell(\overline{\gamma})/ \ell(\gamma) \leq d$, for any geodesic $\gamma$ on $M$ and any lift $\overline{\gamma}$ of $\gamma$ to $\overline{M}$. For the direct implication that (a) implies (b), we assume that $\Cal{L}_p(M, g)$ contains arithmetic progressions. Fixing some $k\geq 3$, our goal is to find a $k$--term arithmetic progression in the set $\Cal{L}_p(\overline{M}, \overline{g})$. From Van der Waerden's theorem (see for instance \cite{van} or \cite{GR}), there is an integer $N:=N(d, k)$, so that if the set $\{1, \ldots , N\}$ is $d$--colored, it contains a $k$--term monochromatic arithmetic progression. Since $\Cal{L}_p(M, g)$ contains arithmetic progressions, we can find a collection of primitive closed geodesics $\gamma_1, \ldots , \gamma_N$ such that the corresponding real numbers $\ell(\gamma_1), \ldots , \ell(\gamma_N)$ form an $N$--term arithmetic progression. For each $\gamma_i$, choose a lift $\overline{\gamma_i}$ inside $\overline{M}$, and color the integer $i$ by the color 
$\ell(\overline{\gamma_i})/\ell(\gamma_i)$. Looking at the monochromatic indices that form an arithmetic 
progression, we see that the corresponding $\ell(\gamma_i)$ form a $k$--term arithmetic progression. Moreover, by construction, the corresponding lifts $\overline{\gamma_i}$ are primitive geodesics whose lengths 
$\ell(\overline{\gamma_i}) = m\cdot \ell(\gamma_i)$. Here $m$ is a fixed integer which we view as the color of 
the monochromatic sequence. This gives the desired $k$--term arithmetic progression in the set $\Cal{L}_p(\overline{M}, \overline{g})$.

For the converse implication, we assume (b), that $\Cal{L}_p(\overline{M},\overline{g})$ has arithmetic progressions. Given a primitive closed geodesic $\overline{\gamma}$ in $\overline{M}$, one can look at the image geodesic $p \circ \overline{\gamma}$ in $M$, and ask whether or not this geodesic is primitive. Since $\overline{\gamma}$ is primitive, the only way $p\circ \overline{\gamma}$ could fail to be primitive is if the map $p$ induced a non-trivial covering from $\overline{\gamma}$ to the image curve $p \circ \overline{\gamma}$. Of course, the degree $d_{\overline{\gamma}}$ of this covering is smaller than or equal to $d$, and the quotient curve will be a primitive geodesic $\gamma$ of length $\ell(\overline{\gamma})/d_{\overline{\gamma}}$. Now as before, to produce a $k$--term arithmetic progression in $\Cal{L}_p({M},{g})$, we let $N$ be the Van der Waerden number $N(d,k)$, and choose a sequence of primitive closed geodesics $\overline{\gamma}_1, \ldots ,\overline{\gamma}_N$ in $\overline{M}$ whose lengths form an arithmetic progression. For each of these, we consider the corresponding primitive closed geodesic $\gamma_i$ in $M$ of length $\ell(\overline{\gamma_i})/d_{\overline{\gamma_i}}$. We color the index $i$ according to the color $d_{\overline{\gamma}_i}$. Then from Van der Waerden's theorem, there is a monochromatic arithmetic subprogression $S\subset \{1, \ldots , N\}$. The corresponding family of primitive geodesics $\{\gamma_i\}_{i\in S}$ have lengths which form a $k$--term arithmetic progression inside $\Cal{L}_p({M},{g})$, as required.
\end{proof}

\begin{rem}
The argument in the proof of Proposition \ref{CommInv} applies almost verbatim in the setting of almost arithmetic progressions, and shows that the following two statements are also equivalent:
\begin{enumerate}
\item[(a)] The primitive length spectrum $\Cal{L}_p(M, g)$ has almost arithmetic progressions.
\item[(b)] The primitive length spectrum $\Cal{L}_p(\overline{M}, \overline{g})$ has almost arithmetic progressions.
\end{enumerate}
As we will not need this result, we leave the details to the interested reader.
\end{rem}

We record the following direct consequence of Proposition \ref{CommInv}.

\begin{cor}\label{Cor:CommInv}
If $M_1,M_2$ be commensurable, Riemannian orbifolds, then $M_1$ has arithmetic progression if and only if $M_2$ has arithmetic progressions.
\end{cor}

To prove Theorem \ref{MainArithmetic}, we require a slightly more technical result than Proposition \ref{CommInv}. We say that a primitive length $\ell \in \Cal{L}_p(M,g)$ \textbf{occurs in arithmetic progressions}, if for any $k$, there exists an integer $k$--term arithmetic progression $\set{a+bs}_{s=1}^{k} \su \N$ such that $\set{\ell(a+bs)}_{s=1}^{k} \su \Cal{L}_p(M,g)$. 

\begin{prop}\label{CommInv2}
For commensurable Riemannian orbifolds $(M,g), (M',g')$, the following are equivalent:
\begin{enumerate}
\item[(a)] Every primitive length in $\Cal{L}_p(M, g)$ occurs in arithmetic progressions.
\item[(b)] Every primitive length in $\Cal{L}_p(M', g')$ occurs in arithmetic progressions.
\end{enumerate}
\end{prop}

\begin{proof}
As both directions are logically equivalent, we will prove that (b) implies (a). We will assume that every primitive length in $\Cal{L}_p(M',g')$ occurs in arithmetic progressions. For each $\ell \in \Cal{L}_p(M)$ and for each $k \in \N$, we must provide $\set{\ell(a+bs)}_{s=1}^{k} \su \Cal{L}_p(M)$ with $a,b \in \N$. To that end, we will make two coloring arguments similar to that made in the proof of Proposition \ref{CommInv}. As $M,M'$ are commensurable, there is a common, finite Riemannian covering $M_0 \to M,M'$. Set $d_M,d_{M'}$ to be the degree of the covers $M_0 \to M,M'$, respectively and for any natural number $s$, let $\tau(s)$ be the number of positive divisors of $s$ (e.g. $\tau(p) = 2$ if $p$ is a prime). Set 
\[ D = \pr{\prod_{1 \leq d \leq d_M} d}\pr{\prod_{1 \leq d \leq d_{M'}} d}. \] 
By Van der Waerden's theorem, there is an integer $N_1$ with the property that any $\tau(d_M)$ coloring
of the set $\{1, \ldots, N_1\}$ contains a monochromatic $k$--term arithmetic progression, and there is an
integer $N_2$ such that any $\tau(d_{M'})$ coloring of the set $\{1, \ldots, N_2\}$ contains a monochromatic 
$N_1$--term arithmetric progression. 

Fix a closed lift to $M_0$ of the geodesic associated to $\ell$, which gives us a primitive geodesic in $M_0$ of length $j \ell$ for some divisor $j$ of $d_M$. This will descend to a (cover of a) primitive geodesic  on $M'$ of length $(j/i) \ell$ where $i$ is a divisor of $d_{M'}$. Since $\ell' = (j/i) \ell$ is the length of a primitive geodesic in $M'$, by assumption there is a constant $C:=C_{\ell',DN_2} \in \N$ such that 
\[ \set{C Dn \ell'}_{n=1}^{N_2} \su \set{C n \ell'}_{n=1}^{DN_2} \su \Cal{L}_p(M'). \] 

For each integer $ 1\leq n \leq N_2$, we take a primitive geodesic in $M'$ of length $CD n \ell'$, and look at a 
lift in $M_0$. The length of this lift will be $i_n\cdot CD n \ell'$, for some divisor $i_n$ of $d_{M'}$, and
we can color each integer $n$ in the set $\{1, \ldots, N_2\}$ by the corresponding $i_n$. This gives a coloring
of the set $\{1, \ldots, N_2\}$ by $\tau(d_{M'})$ colors, so from Van der Waerden's theorem, we can now extract a monochromatic $N_1$--term subsequence 
\[ \{a' + b' r\}_{r=1}^{N_1} \subset \{1, \ldots, N_2\}, \] 
corresponding to some fixed color $i_0$. Notice that this gives a sequence of $N_1$ primitive geodesics in $M_0$ with lengths 
\[ \{(CDi_0)(a'+b'r)\ell'\}_{r=1}^{N_1}. \] 
For each $r$, the corresponding primitive geodesic in $M_0$ projects back down to a (cover of a) primitive geodesic in $M$ of length 
\[ \frac{\big((CDi_0)(a'+b'r)\ell'\big)}{j_r} \] 
for some divisor $j_r$ of $d_M$. So we can color the set of indices $\{1, \ldots , N_1\}$ by the corresponding divisor $j_r$, giving us a coloring with $\tau(d_M)$ colors. By Van der Waerden's theorem, there exists a $k$--term monochromatic subsequence $\{a'' + b''s\}_{s=1}^k$ of indices, corresponding to some fixed color $j_0$. Looking at the corresponding primitive geodesics in $M$, we see that they have lengths given in terms of $s$ by the formula 
\[ \Big(\frac{CDi_0}{j_0}\Big)\big(a'+b'(a'' + b''s)\big)\ell'. \] 
Since $\ell' = (j/i) \ell$, we can substitute in and simplify the expression to obtain 
\[ \set{\Big(\frac{CDi_0j}{j_0i}\Big)\big((a'+ b'a'') + b'b''s)\big)\ell }_{s=1}^k \su \Cal{L}_p(M). \] 
Notice that all the constants appearing in the above expression are integers, and that moreover, the product $j_0i$ is a divisor of $D$. In particular, the following numbers 
\[ a = \Big(\frac{CDi_0j}{j_0i}\Big)(a'+ b'a''), \quad  b = \Big(\frac{CDi_0j}{j_0i}\Big)(b'b'') \] 
are integers, and we obtain a $k$--term arithmetic progression $\set{\ell(a+bs)}_{s=1}^{k} \su \Cal{L}_p(M)$.
\end{proof}

\subsection{The modular surface has arithmetic progressions}
 

\subsubsection{Preliminaries}

The closed geodesics $c_\gamma$ on $X$ are in bijection with the conjugacy classes $[\gamma]$ where $\gamma \in \PSL(2,\Z)$ is hyperbolic. The length of the geodesic $\ell(c_\gamma)$ and trace $\Tr(\gamma)$ are related via the formula (see \cite[p. 384]{MR}) 
\[ 2 \cosh\pr{\frac{\ell(c_\gamma)}{2}} =  | \Tr(\gamma) |. \] 
The geodesic $c_\gamma$ is primitive precisely when $\gamma$ is primitive in $\PSL(2,\Z)$. Up to the sign of the trace, the characteristic polynomial of $\gamma$ will be of the form $P_\gamma(t) = t^2 - | \Tr(\gamma)|t + 1$. As $\abs{\Tr(\gamma)}>2$ (see \cite[p. 51]{MR}), we see that $\lambda_\gamma$ is a real and $\Q(\lambda_\gamma) = K_\gamma/\Q$ is a real quadratic extension by the quadratic formula. Moreover, $\lambda_\gamma \in \Cal{O}_{K_\gamma}^1$ is a unit and $\lambda_\gamma^{-1}$ is the Galois conjugate of $\lambda_\gamma$. By Dirichlet's Unit Theorem (see \cite[p. 142]{Marcus}), the group of units $\Cal{O}_{K_\gamma}^1$ of $\Cal{O}_{K_\gamma}$ is isomorphic to $\set{\pm 1} \times \Z$, where $\Z$ is generated by a fundamental unit. We will say that $\gamma$ is \textbf{absolutely primitive} if $\lambda_\gamma$ is a fundamental unit in $\Cal{O}_{K_\gamma}^1$. 

\begin{lemma}\label{AllFieldsLemma}
For any real quadratic extension $K/\Q$, there exists an absolutely primitive, hyperbolic $\gamma \in \PSL(2,\Z)$ with $K_\gamma = K$.
\end{lemma}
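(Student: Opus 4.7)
My plan is to construct $\gamma$ explicitly as a $2\times 2$ companion matrix whose characteristic polynomial has roots equal to chosen units of $\mathcal{O}_K$. The guiding observation is that for any $\gamma \in \SL(2,\Z)$ one has $\det(\gamma) = 1$, so its two eigenvalues, being Galois conjugates in $K_\gamma$ with product $1$, must lie in the norm-one subgroup of $\mathcal{O}_{K_\gamma}^\times$. Hence to produce an absolutely primitive $\gamma$ I should target a fundamental generator of that norm-one subgroup, which is consistent with the paper's setup: the relevant rank-one group $\mathcal{O}_K^1 \cong \{\pm 1\}\times \Z$ is exactly where $\lambda_\gamma$ lives.

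Concretely, I would write $K = \Q(\sqrt{d})$ with $d>1$ squarefree, let $\epsilon_K$ be a fundamental unit of $\mathcal{O}_K$, and set $u := \epsilon_K$ if $N_{K/\Q}(\epsilon_K) = +1$ and $u := \epsilon_K^2$ if $N_{K/\Q}(\epsilon_K) = -1$. Then $u$ is a primitive generator of the norm-one units modulo $\{\pm 1\}$. Replacing $u$ by $u^{-1}$ if necessary, I may assume $u>1$, so that $T := u + u^{-1} = \Tr_{K/\Q}(u)$ is a rational integer with $T>2$. The candidate matrix would be
\[
\gamma := \begin{pmatrix} 0 & -1 \\ 1 & T \end{pmatrix} \in \SL(2,\Z).
\]

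To verify the conclusion, I would observe that the characteristic polynomial of $\gamma$ is $t^2 - Tt + 1 = (t-u)(t-u^{-1})$, so the eigenvalues of $\gamma$ are $u$ and $u^{-1}$, giving $\lambda_\gamma = u$. The equality $K_\gamma = \Q(u) = K$ follows from $u \notin \Q$, since the only rational units of $\mathcal{O}_K$ are $\pm 1$ and $u>1$. The condition $|\Tr(\gamma)| = T > 2$ shows $\gamma$ is hyperbolic, and $u$ is a fundamental unit of $\mathcal{O}_K^1$ by construction, so $\gamma$ is absolutely primitive.

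The main subtlety I anticipate is the case $N_{K/\Q}(\epsilon_K) = -1$, which forces $u = \epsilon_K^2$: the fundamental unit $\epsilon_K$ itself cannot be an eigenvalue of any matrix in $\SL(2,\Z)$, since the eigenvalues of such a matrix must have norm $+1$. This is also where the phrase ``fundamental unit in $\mathcal{O}_K^1$'' needs to be read as ``fundamental unit of the norm-one subgroup'' for the lemma to hold uniformly in $K$. Beyond this piece of bookkeeping, I do not expect any real obstacle; the companion-matrix construction handles hyperbolicity, the correct field of definition, and absolute primitivity in a single stroke.
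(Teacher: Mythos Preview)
Your proof is correct and takes a closely related but genuinely different route from the paper. The paper embeds $\mathcal{O}_K^1$ into $\SL(2,\Z)$ via the regular representation of $K$ on itself with respect to an integral basis $\{a_1,a_2\}$ of $\mathcal{O}_K$, and then takes the image of a fundamental unit of $\mathcal{O}_K^1$. You instead write down the companion matrix of the minimal polynomial $t^2 - Tt + 1$ of that fundamental unit directly. The two constructions produce $\GL(2,\R)$--conjugate matrices (same characteristic polynomial), so the end result is the same, but the perspectives differ: the regular representation is more conceptual and generalizes cleanly to $\mathcal{O}_K$ for higher-degree $K$ and to $\PSL(n,\Z)$ (which the paper exploits later), whereas your companion-matrix approach is more elementary and fully explicit. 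Your treatment also handles the case $N_{K/\Q}(\epsilon_K) = -1$ carefully, taking $u = \epsilon_K^2$ so that $u$ genuinely generates the norm-one units; the paper's brief proof suppresses this point.
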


\begin{proof}
Let $K/\Q$ be a real quadratic extension with $\Z[a_1,a_2] = \Cal{O}_K$. For $\alpha \in K$, left multiplication on $K$ by $\alpha$ is a $\Q$--linear map. Using the $\Q$--basis $\set{a_1,a_2}$ for $K$, we obtain an injective $\Q$--algebra homomorphism $K \to \mathrm{M}(2,\Q)$ and injective group homomorphisms $K^\times \to \GL(2,\Q)$, $\Cal{O}_K^\times \to \GL(2,\Z)$. The group $\Cal{O}_K^1$ maps into $\SL(2,\Z)$ and the image of a fundamental unit is absolutely primitive and hyperbolic.
\end{proof}

\begin{lemma}\label{PowerLemma}
If $\gamma,\eta \in \PSL(2,\Z)$ are hyperbolic with $K_\gamma = K_\eta = K$, then there exist $j_\gamma,j_\eta \in \Z$ such that $\Tr(\gamma^{j_\gamma}) = \Tr(\eta^{j_\eta})$.
\end{lemma}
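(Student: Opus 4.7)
The plan is to exploit the structure of the unit group $\mathcal{O}_K^1$, which by Dirichlet's Unit Theorem (already invoked in the setup) is isomorphic to $\{\pm 1\}\times \Z$. The key observation is that the trace of a hyperbolic element $\gamma \in \PSL(2,\Z)$ is essentially determined by the exponent of its larger eigenvalue relative to a fundamental unit $\epsilon$ of $\mathcal{O}_{K_\gamma}^1$, and that raising $\gamma$ to a power $j$ multiplies this exponent by $j$. Once this is in hand, equalizing the traces becomes a matter of finding a common multiple of two positive integers.

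First I would fix a fundamental unit $\epsilon \in \mathcal{O}_K^1$ for the common quadratic field $K = K_\gamma = K_\eta$. Since $\lambda_\gamma, \lambda_\eta \in \mathcal{O}_K^1$, we can write $\lambda_\gamma = \pm \epsilon^{a}$ and $\lambda_\eta = \pm \epsilon^{b}$ for some nonzero integers $a,b$. Since swapping $\lambda$ with $\lambda^{-1}$ does not change the trace $\lambda+\lambda^{-1}$, we may assume $a,b > 0$. Then the eigenvalues of $\gamma^n$ are $\pm \epsilon^{\pm an}$, and in particular
\[ \Tr(\gamma^{n}) = \pm\bigl(\epsilon^{an} + \epsilon^{-an}\bigr), \]
with an entirely analogous formula for $\eta$.

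Next I would take $j_\gamma := b$ and $j_\eta := a$ (or, to avoid issues of having to pass to large enough powers to make $\gamma^{j_\gamma}, \eta^{j_\eta}$ hyperbolic rather than merely of infinite order, any positive common multiple of the form $j_\gamma = Nb$, $j_\eta = Na$). Then both $\Tr(\gamma^{j_\gamma})$ and $\Tr(\eta^{j_\eta})$ equal $\pm(\epsilon^{ab} + \epsilon^{-ab})$, so they agree up to a sign.

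The only remaining subtlety, and the main obstacle, is the sign ambiguity inherent to working in $\PSL(2,\Z)$ rather than $\SL(2,\Z)$: a priori one trace might be $\epsilon^{ab}+\epsilon^{-ab}$ while the other is $-(\epsilon^{ab}+\epsilon^{-ab})$. Since the trace of a matrix and its negative differ only in sign, and $\Tr(M^2) = \Tr(M)^2 - 2$ is always positive for hyperbolic $M$, I would handle this by doubling the exponents: replacing $j_\gamma$ by $2b$ and $j_\eta$ by $2a$ forces both traces to be given by the same sign of the same expression $\epsilon^{2ab} + \epsilon^{-2ab}$, yielding genuine equality $\Tr(\gamma^{2b}) = \Tr(\eta^{2a})$ as required.
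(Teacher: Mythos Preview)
Your argument is correct and follows essentially the same route as the paper: express the larger eigenvalues as powers $\epsilon^{a}$, $\epsilon^{b}$ of a fundamental unit and then pass to a common exponent. The paper uses $L=\lcm(a,b)$ where you use the product $ab$, which makes no difference. Your extra doubling step to fix the sign is unnecessary under the paper's convention that $\Tr$ is only defined up to sign in $\PSL(2,\Z)$, but it is harmless (and your observation that $\Tr(M^2)=\Tr(M)^2-2>0$ does indeed pin down the sign in $\SL$). One small aside: any positive power of a hyperbolic element is again hyperbolic, so there is no need to worry about ``large enough powers''.
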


\begin{proof}
Each of $\gamma,\eta$ is conjugate to a diagonal matrix of the form 
\[ \begin{pmatrix} \lambda_\gamma & 0 \\ 0 & \lambda_\gamma^{-1} \end{pmatrix}, \quad \begin{pmatrix} \lambda_\eta & 0 \\ 0 & \lambda_\eta^{-1} \end{pmatrix}. \] 
We know $\mu_K^{t_\gamma} = \lambda_\gamma$ and $\mu_K^{t_\eta} = \lambda_\eta$ for some $t_\gamma,t_\eta \in \Z$ where $\mu_K \in \mathcal{O}_K^1$ is a fundamental unit. Setting $L = \mathrm{LCM}(t_\gamma,t_\eta)$, we take $j_\gamma = \frac{L}{t_\gamma}$, $j_\eta = \frac{L}{t_\eta}$.
\end{proof}

As a consequence of Lemma \ref{AllFieldsLemma} and Lemma \ref{PowerLemma}, we have the following result.

\begin{cor}\label{PrimitiveLemma}
If $\gamma \in \PSL(2,\Z)$ is absolutely primitive, then $\gamma$ is primitive. Moreover, if $\gamma$ is primitive, then there exists an absolutely primitive $\eta \in \PSL(2,\Z)$ such that $\Tr(\gamma) = \Tr(\eta^j)$ for some $j \in \N$.
\end{cor}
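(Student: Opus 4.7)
The plan is to handle the two implications separately, using the structure theorem for units in $\Cal{O}_{K_\gamma}^1$ supplied by Dirichlet together with the lemmas just established.

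For the forward direction, I would argue by contradiction. Suppose $\gamma$ is absolutely primitive but $\gamma = \eta^k$ in $\PSL(2,\Z)$ with $k \geq 2$. Since powers of parabolic elements are parabolic and powers of elliptic elements are elliptic or trivial, the fact that $\gamma$ is hyperbolic forces $\eta$ to be hyperbolic as well. Diagonalizing, the eigenvalues satisfy $\lambda_\gamma = \pm \lambda_\eta^k$, so $K_\gamma = \Q(\lambda_\gamma) \subseteq \Q(\lambda_\eta) = K_\eta$. But $\eta \in \PSL(2,\Z)$ has integer trace, so $K_\eta$ is a real quadratic extension of $\Q$, hence $K_\gamma = K_\eta$. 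Then $\lambda_\gamma$ is a proper $k$-th power inside $\Cal{O}_{K_\gamma}^1$, contradicting that it is a fundamental unit.

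For the reverse direction, I would invoke Lemma \ref{AllFieldsLemma} to produce an absolutely primitive element $\eta \in \PSL(2,\Z)$ with $K_\eta = K_\gamma$; by definition $\lambda_\eta$ is a fundamental unit of $\Cal{O}_{K_\gamma}^1$. After replacing $\gamma$ by its inverse if necessary, we may assume $\lambda_\gamma > 1$, and similarly for $\eta$. Since $\Cal{O}_{K_\gamma}^1 \cong \set{\pm 1} \times \Z$ with $\Z$ generated by $\lambda_\eta$, there exists a positive integer $j$ with $\lambda_\gamma = \pm \lambda_\eta^j$. Taking traces (which are well-defined up to sign in $\PSL(2,\Z)$) yields
\[ \Tr(\gamma) = \pm\pr{\lambda_\eta^j + \lambda_\eta^{-j}} = \pm \Tr(\eta^j), \]
which is the desired equality in the $\PSL$ sense.

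The only genuine obstacle is bookkeeping of signs, since we are working in $\PSL(2,\Z)$ rather than $\SL(2,\Z)$: one must verify that the fundamental unit of $\Cal{O}_{K_\gamma}^1$, viewed up to the sign ambiguity inherent in $\PSL$, corresponds to an absolutely primitive hyperbolic element, and that the relation $\Tr(\gamma) = \Tr(\eta^j)$ is interpreted modulo $\pm 1$ as elsewhere in this subsection. Everything else is a direct consequence of Dirichlet's Unit Theorem and the two preceding lemmas; in particular, the first implication is essentially the contrapositive of the observation that a non-primitive $\gamma$ forces $\lambda_\gamma$ to be a proper power in $\Cal{O}_{K_\gamma}^1$.
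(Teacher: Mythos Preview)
Your proof is correct and follows the approach the paper intends: the paper merely asserts the corollary is a consequence of Lemma~\ref{AllFieldsLemma} and Lemma~\ref{PowerLemma} without spelling out the details, and your argument is precisely the unpacking of those details. Your second direction is essentially the specialization of Lemma~\ref{PowerLemma} to the case where $\eta$ is absolutely primitive (so $t_\eta = 1$ in the notation of that proof, forcing $j_\gamma = 1$), and your first direction is the elementary contrapositive that the paper leaves implicit.
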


\subsubsection{Producing long progressions}

Taking $\Gamma = \PSL(2,\Z)$, for each $\eta \in \PGL(2,\Q)$, $\Gamma_\eta = (\eta \Gamma \eta^{-1}) \cap \Gamma$ is a finite index subgroup of $\Gamma$ and $\eta \Gamma \eta^{-1}$ (see \cite[Ch.~10]{Rag}). We define 
\[ P\colon \Gamma \times \PGL(2,\Q) \longrightarrow \N \]
by 
\begin{equation}\label{Eq:PowerDef}
P(\gamma,\eta) = \min\set{j \in \N~:~ (\eta \gamma \eta^{-1})^j \in \Gamma}
\end{equation}
and note that $P(\gamma,\eta) \leq [\Gamma: \Gamma_\eta]$. For a fixed $\gamma \in \Gamma$, we define
\begin{equation}\label{Eq:PowerSet}
\Cal{P}(\gamma) = \set{P(\gamma,\eta)~:~ \eta \in \PGL(2,\Q)} \subseteq \N.
\end{equation}
We set 
\begin{equation}\label{Eq:ThetaDef}
\theta_{\gamma,\eta} = \eta \gamma^{P(\gamma,\eta)} \eta^{-1} \in \Gamma
\end{equation}
and note that $\ell(c_{\theta_{\gamma,\eta}}) = P(\gamma,\eta)\ell(c_\gamma)$. 

\begin{thm}\label{GenConstructionSL2}
If $\gamma \in \PSL(2,\Z)$ is primitive and hyperbolic with associated geodesic length $\ell = \ell(c_\gamma)$ and $k \in \N$, then  there exists an arithmetic progression $\set{C_{\gamma,k} \ell n}_{n=1}^k \su \Cal{L}_p(X)$ where $C_{\gamma,k} \in \Q$. Moreover, there exists $D_\gamma \in \N$ such that $C_{\gamma,k}D_\gamma \in \N$ for all $k$.
\end{thm}

We will see that the failure of $C_{\gamma,k}$ to be an integer is controlled by the failure of $\gamma$ to be absolutely primitive. Specifically, Theorem \ref{GenConstructionSL2} is a consequence of the following result in combination with Corollary \ref{PrimitiveLemma}.

\begin{thm}\label{PrimitiveConstructionSL2}
If $\gamma \in \PSL(2,\Z)$ is absolutely primitive and hyperbolic with associated geodesic length $\ell$ and $k \in \N$, then there exists an arithmetic progression $\set{C_{\gamma,k} \ell n}_{n=1}^k \su \Cal{L}_p(X)$ where $C_{\gamma,k} \in \N$.
\end{thm}

Before proving Theorem \ref{PrimitiveConstructionSL2}, we make a comment about finding arithmetic progressions in $\PSL(2,\Z)$. 

\begin{rem}
For each $a \in \N$ with $a>2$, we have $\gamma_a = \begin{pmatrix} a & -1 \\ -1 & 0 \end{pmatrix} \in \Gamma$ and every hyperbolic $\gamma \in \Gamma$ is conjugate in $\PGL(2,\Q)$ to $\gamma_a$. The eigenvalues for $\gamma_a$ are 
\[ \lambda = \frac{a \pm \sqrt{a^2 + 4}}{2}. \]
To obtain arithmetic progressions from the $\gamma_a$, one needs to prove that $\gamma_a$ is primitive in $\Gamma$. For small values of $a$, the eigenvalues of $\gamma_a$ are fundamental units and so $\gamma_a$ is primitive by Corollary \ref{PrimitiveLemma}. For $a=11$, the splitting field of the characteristic polynomial of $\gamma_{11}$ is $K = \Q(\sqrt{5})$. We have in this case that
\[ \lambda_{\gamma_{11}} = \frac{11 + 5\sqrt{5}}{2}, \quad \mu_K = \frac{1+\sqrt{5}}{2}. \]
In particular, $\lambda_{\gamma_{11}} = \mu_K^5$ and it could be the case that $\gamma_a = \eta^j$ for $j=5$ from some $\eta \in \Gamma$. Checking that $\gamma_{11}$ is primitive using the multinomial equations coming from the matrix entries from the equality $\gamma_{11} = \eta^5$ for a variable matrix
\[ \eta = \begin{pmatrix} x & y \\ w & z \end{pmatrix} \]
is not straightforward. For point of illustration, we can express the equality $\gamma_{11} =\eta^5$ instead as $\eta^{-2}\gamma_{11} = \eta^3$ and obtain five equations in $x,y,z,w$ (including $\det(\eta) = 1$):
\begin{align*}
x^3 + 2wxy + wxy &= 11wy + xy + 11z^2 + yz \\
wy^2 + x^2y + yz^2 + xyz &= -wy - z^2 \\
-wx^2 + w^2y + wz^2 + wxz &= -x^2 - 11wx - wy - wz \\
wxy + z^3 + 2wyz &= wx + wz \\
xz - yw & = 1.
\end{align*}
Varying $a$, the eigenvalues of $\gamma_a$ can be arbitrarily large powers of the fundamental unit and so one must verify that there are no solutions to the equation $\gamma_a = \eta^j$ for arbitrarily large $j$. One might instead use hyperbolic geometry. The geodesic axis stabilized by $\gamma_a$ must also be stabilized by $\eta$. Using the eigenvalues for $\gamma_a$, we can determine the two fixed points $x_-,x_+  \in \partial \mathbf{H}^2$ for $\gamma_a$ and then determine precisely which elements of $\Gamma$ also $x_-,x_+$. This also entails solving equations. We then must verify from these solutions that $\gamma_a$ generates the full stabilizer of this axis in $\Gamma$. Another geometric approach was suggested to us by Lakeland. The $\gamma_a$ have isometric circles (i.e. the set of points $x \in \B{H}^2$ such that $\abs{\gamma \, '(x)}=1$) with maximal radii. If $\gamma_a = \eta^j$ for $j>1$ and $\eta \in \Gamma$, then the radius for the isometric circle for $\eta$ would be strictly larger than $1$. To make this rigorous, one would need to prove that the $\gamma_a$ satisfy this extremal property with regard to the radii of their isometric circles. Assuming one has established that $\gamma_a$ is primitive, one can produce arithmetic progressions in the primitive length spectrum of the modular surface. Explicitly, for each $a>2$, we have the infinite arithmetic progression
\[ \set{\ell(c_{\gamma_a}), \ell(c_{\gamma_{\Tr(\gamma_a^2)}}),\ell(c_{\gamma_{\Tr(\gamma_a^3)}}),\ell(c_{\gamma_{\Tr(\gamma_a^4)}})\dots} \subset \mathcal{L}_p(X). \]
In combination with Corollary \ref{Cor:CommInv}, we conclude that all non-compact, arithmetic hyperbolic 2--orbifolds have arithmetic progressions. Moreover, for any primitive hyperbolic $\gamma \in \Gamma$, we have $\Tr(\gamma) = \pm \Tr(\gamma_a)$ for some $a >2$, and so every primitive length arises in arithmetic progressions also. 

Rather than attempt to check $\gamma_a$ is primitive, we give an alternative approach. Our method is elementary, using only linear algebra, modular arithmetic, and number theory. We replace the set of $\gamma_a$ with the set of absolutely primitive elements. Instead of establishing primitivity of the $\gamma_a$, we must find suitable conjugating elements for each absolutely primitive elements to produce $k$--term arithmetic sequences in $\mathcal{P}(\gamma)$. The conjugating elements we use are directly related to Hecke operators for the modular surface; they also do not depend on the specific absolutely primitive element. We note that both the set of $\gamma_a$ and the set of absolutely primitive elements satisfy a universal property. Every element of $\PSL(2,\Z)$ is conjugate in $\PGL(2,\Q)$ to a $\gamma_a$ while it is also conjugate in $\PGL(2,\Q)$ to a power of an absolutely primitive element; both $\gamma_a$ and the absolutely primitive element are also unique as $\Q$ has class number one. Our method also explicitly illustrates the underlying reason for why such progressions exist; the action of the commensurator $\PGL(2,\Q)$. That reason is the motivation for Conjecture A in \S 5 and the spectral isolation problem for locally symmetric metrics. Our method also has clear generalizations to other settings; Miller's subsequent work \cite{Miller} verifies that Theorem \ref{GenConstructionSL2} holds for \emph{all} arithmetic lattices (in the setting of Conjecture A), and utilizes this approach. 

\end{rem}


In order to produce arbitrarily long arithmetic progressions in $\Cal{L}_p(X)$, we proceed in two steps. 
\begin{enumerate}
\item[\textbf{Step 1.}]
For a hyperbolic $\gamma \in \Gamma$, we use the fixed collection $\set{\eta_m = \begin{pmatrix} 1 & 0 \\ 0 & m \end{pmatrix}} \su \PGL(2,\Q)$ to show that the set $\Cal{P}(\gamma)$ contains arbitrarily long arithmetic progressions; see \eqref{Eq:PowerSet} above. The hyperbolic elements are given by $\theta_{\gamma,\eta_m} = \eta_m \gamma^{P(\gamma,\eta_m)} \eta_m^{-1}$; see \eqref{Eq:PowerDef} and \eqref{Eq:ThetaDef} above.
\item[\textbf{Step 2.}]
We prove that when $\gamma \in \Gamma$ is hyperbolic and absolutely primitive, $\theta_{\gamma,\eta_m}$ primitive for all $m \in \N$.
\end{enumerate}

Step 1 breaks up into three sub-steps, starting first with the case when $m$ is prime and then proceeding to more intricate cases with respect to the prime factorization of $m$. The Chinese Remainder Theorem allows us to reduce to the case of prime powers. In the case of prime powers, the key fact in the production of arithmetic progressions is that the kernel of the homomorphism $\PSL(2,\Z/p^{j+1}\Z) \to \PSL(2,\Z/p^j\Z)$ induced by the ring homomorphism $\Z/p^{j+1}\Z \to \Z/p^j\Z$ is a $p$--group of exponent $p$. Step 2 is relatively straightforward and highlights the relevance of absolutely primitive, hyperbolic elements. 

\begin{proof}[Proof of Theorem \ref{PrimitiveConstructionSL2}]
For $\alpha \in \R$, we define $\eta_\alpha = \begin{pmatrix} 1 & 0 \\ 0 & \alpha \end{pmatrix}$ and note that $\eta_{\alpha^{-1}} = \eta_\alpha^{-1}$. Given $\gamma = \begin{pmatrix} a & b \\ c & d \end{pmatrix}$ and $m \in \N$, we see that
\[ \eta_m \gamma \eta_m^{-1} = \begin{pmatrix} 1 & 0 \\ 0 & m \end{pmatrix} \begin{pmatrix} a & b \\ c & d \end{pmatrix} \begin{pmatrix} 1 & 0 \\ 0 & m^{-1} \end{pmatrix} = \begin{pmatrix} a & m^{-1} b \\ mc & d \end{pmatrix} \]
and  $P(\gamma,\eta_m) = \min\set{j \in \N~:~ m \mid b_j}$ where $\gamma^j = \begin{pmatrix} a_j & b_j \\ c_j & d_j \end{pmatrix}$. Set
\[ \B{B}_L(\Z/m\Z) = \set{\begin{pmatrix} a & 0 \\ c & d \end{pmatrix}~:~a,c,d \in \Z/m\Z} < \PSL(2,\Z/m\Z). \]
We have the homomorphism $r_m\colon \Gamma \lra \PSL(2,\Z/m\Z)$ given by reducing the matrix coefficients modulo $m$ and $P(\gamma,\eta_m)$ is the smallest integer $j$ 
such that $r_m(\gamma^j) \in \B{B}_L(\Z/m\Z)$. Note that since $\gamma$ is hyperbolic, we have both 
$b,c \ne 0$ and for all $j\geq 1$, $b_j,c_j \ne 0$. Indeed, if this were not the case, then some power 
$\gamma^j$ would have either the form $\begin{pmatrix} a_j & 0 \\ c_j & d_j \end{pmatrix}$ or $\begin{pmatrix} a_j & b_j \\ 0 & d_j \end{pmatrix}$. Being an element of $\PSL(2,\Z)$ forces $a_j,d_j = \pm 1$ and thus $\gamma$ would be virtually unipotent, which is impossible since $\gamma$ is hyperbolic. 

\quad\quad \underline{\textbf{Step 1:}}~\textsl{Produce arithmetic progressions in $\mathcal{P}(\gamma)$ using $\eta_m$ for $m \in \N$.}

\smallskip\smallskip

\quad\quad\quad\quad \underline{\textbf{Step 1.1:}}~ \textsl{$m = p$ is prime.}
\smallskip\smallskip

As noted above, $P(\gamma,\eta_p)$ is the smallest power $j$ such that $r_p(\gamma^j) \in \B{B}_L(\B{F}_p)$. We have
\[ \abs{\PSL(2,\B{F}_p)} = \frac{p(p-1)(p+1)}{2}, \quad \abs{\B{B}_L(\B{F}_p)} = \frac{p(p-1)}{2} \] 
and so $P(\gamma,\eta_p) \leq p + 1$. 

\quad\quad\quad\quad \underline{\textbf{Step 1.2:}} ~\textsl{$m = p^k$ is a prime power.}

\smallskip\smallskip

As noted above, $P(\gamma,\eta_{p^k})$ is the smallest $j$ such that $r_{p^k}(\gamma^j) \in \B{B}_L(\Z/p^k\Z)$. We have the short exact sequence (see \cite[Cor 9.3]{Bass}, 
\cite[Ch.~9]{DSMS}, or \cite[Lem 16.4.5]{LS})
\begin{equation}\label{ExactSequence}
1 \lra V_{p} \lra \PSL(2,\Z/p^k\Z) \lra \PSL(2,\Z/p^{k-1}\Z) \lra 1,
\end{equation}
where $V_{p} \cong (\B{F}_{p}^3,+)$. We also have an exact sequence 
\[ 1 \lra W_{p} \lra \B{B}_L(\Z/p^k\Z) \lra \B{B}_L(\Z/p^{k-1}\Z) \lra 1, \]
where $W_{p} \cong (\B{F}_{p}^2,+)$. Since $(\B{F}_p^j,+)$ is an abelian group of exponent $p$ for any $j>0$, we have
\[ P(\gamma,\eta_{p^k}) = p^{s_k}P(\gamma,\eta_{p^{k-1}}) \] 
where $s_k = 0,1$. Thus, for 
\[ t_k = \sum_{n=2}^k s_n, \] 
we see that $P(\gamma,\eta_{p^k}) = p^{t_k} P(\gamma,\eta_{p})$, where $P(\gamma,\eta_{p}) \leq p + 1$. We require the following lemma.

\begin{lemma}\label{ResidualBorelLemma}
If $\tau \in \PSL(2,\Z)$ satisfies $r_{p^k}(\tau) \in \B{B}_L(\Z/p^k\Z)$ for all $k \in \N$, then $\tau \in \B{B}_L(\Z)$.
\end{lemma}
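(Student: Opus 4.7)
The plan is to interpret the hypothesis as a divisibility condition on the upper-right entry of a matrix lift of $\tau$, and then exploit the fact that $\bigcap_{k\in \N} p_i^k\Z = \{0\}$.

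Concretely, I would first choose a lift $T = \begin{pmatrix} a & b \\ c & d \end{pmatrix} \in \SL(2,\Z)$ of $\tau$. The subgroup $\B{B}_L(\Z/p_i^k\Z) \leq \PSL(2,\Z/p_i^k\Z)$ is, by definition, the image in $\PSL$ of the lower-triangular elements of $\SL(2,\Z/p_i^k\Z)$. Since the only scalar in $\SL$ is $\pm I$, which is itself lower-triangular, an element of $\PSL(2,\Z/p_i^k\Z)$ belongs to $\B{B}_L(\Z/p_i^k\Z)$ if and only if \emph{every} (equivalently, any one) $\SL$-lift of it is lower-triangular. Applied to $r_{p_i^k}(\tau)$, this says precisely that $b \equiv 0 \pmod{p_i^k}$.

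The hypothesis therefore yields $p_i^k \mid b$ for every sufficiently large $k$. Since $b \in \Z$, this forces $b = 0$, so $T = \begin{pmatrix} a & 0 \\ c & d \end{pmatrix}$, and hence $\tau \in \B{B}_L(\Z)$, as required.

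There is no real obstacle here beyond keeping the $\PSL$/$\SL$ bookkeeping straight: the only genuine step is the elementary number-theoretic fact that an integer divisible by arbitrarily large powers of a fixed prime must vanish. Should one wish to avoid even the mild sign ambiguity of working in $\PSL$, one can phrase the argument entirely in terms of lifts by noting that both $T$ and $-T$ have the same upper-right entry up to sign, so the condition ``$r_{p_i^k}(\tau) \in \B{B}_L$'' is unambiguously $p_i^k \mid b$ regardless of which lift is chosen.
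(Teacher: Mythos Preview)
Your argument is correct and is essentially the same as the paper's own proof: write $\tau$ as $\begin{pmatrix} a & b \\ c & d \end{pmatrix}$, observe that the hypothesis forces $p_i^k \mid b$ for all sufficiently large $k$, and conclude $b=0$. The only difference is that you spell out the $\SL/\PSL$ lift ambiguity more carefully, which the paper leaves implicit.
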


\begin{proof}
If $\tau = \begin{pmatrix} a & b \\ c & d \end{pmatrix} \in \PSL(2,\Z)$ is such that $r_{p^k}(\tau) \in \B{B}_L(\Z/p^k\Z)$ for all $k$, then $b=0$.
\end{proof}

As $\gamma$ is hyperbolic, $P(\gamma,\eta_{p^k}) = j_{k}$ is an unbounded sequence by Lemma \ref{ResidualBorelLemma}. Since $j_{k}$ is unbounded, there exists a subsequence $n_{t}$ such that $P(\gamma,\eta_{p^{n_{t}}}) = p^t P(\gamma,\eta_{p})$,  where $t$ ranges over $\N$. In particular, we have 
\[ \set{P(\gamma,\eta_{p}),pP(\gamma,\eta_{p}),p^2P(\gamma,\eta_{p}),p^3P(\gamma,\eta_{p}),\dots} \su \Cal{P}(\gamma). \] 

\quad\quad\quad\quad \underline{\textbf{Step 1.3:}}~\textsl{$m = p_1^{r_1}p_2^{r_2}\dots p_v^{r_v}$ is a product of primes to powers.}

\smallskip\smallskip

For distinct primes $p_1,\dots,p_v$ and $r_1,\dots,r_v \in \N$, set 
\[ m = \prod_{u=1}^v p_u^{r_u}. \] 
By the Chinese Remainder Theorem, there are isomorphisms
\begin{align*}
\PSL\pr{2,\Z/m\Z} &\cong \prod_{u=1}^v \PSL(2,\Z/p_u^{r_u}\Z) \\
\B{B}_L\pr{\Z/m\Z} &\cong \prod_{u=1}^n \B{B}_L(\Z/p_u^{r_u}\Z). 
\end{align*}
Thus, 
\[ P(\gamma,\eta_{m}) = \textrm{LCM}\set{P(\gamma,\eta_{p_1^{r_1}}),\dots,P(\gamma,\eta_{p_v^{r_v}})}. \] 
Since for each prime $p_i$, the sequence $P(\gamma,\eta_{p_i^k})$ is of the form $p_i^{t_k}P(\gamma,\eta_{p_i})$, we see that
\[ P(\gamma,\eta_{m}) = \pr{\prod_{u=1}^v p_u^{t_{r_u}}} \textrm{LCM}\set{P(\gamma,\eta_{p_1}),\dots,P(\gamma,\eta_{p_v})}. \]
For
\begin{equation}\label{ConstantEqu}
C_{\gamma, p_1,\dots,p_v} = \textrm{LCM}\set{P(\gamma,\eta_{p_1}),\dots,P(\gamma,\eta_{p_v})},
\end{equation}
we see that $\set{C_{\gamma, p_1,\dots,p_v}p_1^{w_1}\dots p_u^{w_u}}_{w_i\geq 0} \su \Cal{P}(\gamma)$, where $w_1,\dots,w_u$ range independently over all possible non-negative integers. From this fact, it is now a simple matter to produce arithmetic progressions in $\Cal{P}(\gamma)$.  Let $k \in \N$ and let $p_1,\dots,p_{u_k}$ to be all the prime divisors of the numbers $\set{1,\dots,k}$. Using these primes and setting $C_k:=C_{\gamma, p_1,\dots,p_{u_k}}$, the discussion in the previous paragraph yields
\[ \set{C_k, 2C_k, \ldots , kC_k} \su \set{C_k \cdot p_1^{w_1}\dots p_{u_k}^{w_{u_k}}}_{w_i\geq 0} 
\su \Cal{P}(\gamma). \]

Now, for each $1\leq r \leq k$, we have associated to the number $C_k r \in  \Cal{P}(\gamma)$ an element 
\[ \theta_{\gamma,\eta_r} = \eta_r \gamma^{C_k r} \eta_r^{-1} \in \PSL(2,\Z). \] 
The associated geodesic for $\theta_{\gamma,\eta_r}$ has length $\ell(c_{\theta_{\gamma,\eta_r}}) = C_k r \ell(c_\gamma)$. In particular, as $r$ ranges over $1\leq r \leq k$, we have a $k$--term arithmetic progression involving an integral multiple of the length of $\gamma$, where each of these lengths arises as the length of some closed geodesic. This completes the first step. 

\quad \quad \underline{\textbf{Step 2:}} \textsl{Prove $\theta_{\gamma,\eta}$ is primitive when $\gamma$ is absolutely primitive and $\eta \in \PGL(2,\Q)$.}

\smallskip\smallskip


To this end, let $\eta \in \PGL(2,\Q)$ and let $j = P(\gamma,\eta)$ with $\theta_{\gamma,\eta} = \eta \gamma^j \eta^{-1}$. By way of contradiction, assume there exists $\mu \in \PSL(2,\Z)$ with $\mu^{j'} = \theta_{\gamma,\eta}$. Diagonalizing via some $D \in \PGL(2,\R)$, we see that 
\[ D \mu^{j'} D^{-1} = D \theta_{\gamma,\eta} D^{-1} =  D \eta \gamma^j \eta^{-1} D^{-1} \] 
and
\[ \begin{pmatrix} \lambda_{\theta_{\gamma,\eta}} & 0 \\ 0 & \lambda_{\theta_{\gamma,\eta}}^{-1} \end{pmatrix} = \begin{pmatrix} \lambda_{\mu}^{j'} & 0 \\ 0 & \lambda_\mu^{-j'} \end{pmatrix} = \begin{pmatrix} \lambda_\gamma^j & 0 \\ 0 & \lambda_\gamma^{-j} \end{pmatrix}. \]
Since $\gamma$ is absolutely primitive, $\lambda_\mu = \lambda_\gamma^L$ for some $L \in \N$ and so
\[ D \mu D^{-1} = \begin{pmatrix} \lambda_\mu & 0 \\ 0 & \lambda_\mu^{-1} \end{pmatrix} = \begin{pmatrix} \lambda_\gamma^L & 0 \\ 0 & \lambda_\gamma^{-L} \end{pmatrix} = D \eta \gamma^L \eta^{-1} D^{-1}. \] 
Consequently, we have $\eta \gamma^L \eta^{-1} = \mu \in \PSL(2, \Z)$. As $j$ is the smallest power of $\gamma$ whose $\eta$--conjugate lands in $\PSL(2, \Z)$, we conclude that $L\geq j$. On the other hand, the fact that $\mu^{j'} = \theta_{\gamma,\eta}$ immediately tells us that $j'L = j$, which gives us $L\leq j$ since $j,L>0$. Hence $L=j$ and $j'=1$, and so $\theta_{\gamma,\eta}$ is primitive. 
\end{proof}

Since every non-compact, arithmetic, hyperbolic $2$--orbifold is commensurable with the modular surface (see \cite[Thm 8.2.7]{MR}), our work above in tandem with Corollary \ref{Cor:CommInv} yields:

\begin{cor}\label{Arith2Man}
If $M$ is a non-compact, arithmetic, hyperbolic $2$--orbifold, then $\Cal{L}_p(M)$ contains arithmetic progressions.
\end{cor}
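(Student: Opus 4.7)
The plan is to deduce this corollary as an essentially immediate consequence of the two main ingredients already established in this subsection, together with the classical commensurability classification of non-compact arithmetic hyperbolic surfaces.

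First, I would invoke the cited Theorem 8.2.7 of Maclachlan--Reid, which asserts that every non-compact, arithmetic, hyperbolic $2$--orbifold lies in the commensurability class of the modular surface $X = \Hy^2/\PSL(2,\Z)$. Concretely, this gives a third orbifold $\wh{M}$ which is simultaneously a finite orbifold cover of both $M$ and $X$ (equipped with the pullbacks of their constant curvature $-1$ metrics). Next, Theorem \ref{GenConstructionSL2} (or equivalently the stronger Theorem \ref{PrimitiveConstructionSL2}) ensures that $\Cal{L}_p(X)$ has arithmetic progressions -- indeed, starting from any primitive hyperbolic $\gamma \in \PSL(2,\Z)$ and any $k$, it produces a $k$--term arithmetic progression of primitive geodesic lengths on $X$.

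With these in hand, I apply Proposition \ref{CommInv} twice. Applied to the cover $\wh{M} \to X$, the direction (a) $\Rightarrow$ (b) transfers arithmetic progressions from $\Cal{L}_p(X)$ to $\Cal{L}_p(\wh{M})$. Applied to the cover $\wh{M} \to M$, the direction (b) $\Rightarrow$ (a) then transfers arithmetic progressions from $\Cal{L}_p(\wh{M})$ to $\Cal{L}_p(M)$. Since the metric on $M$ is the one making the covering maps local isometries, this precisely yields the desired conclusion.

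There is no real obstacle here: the only non-trivial inputs are (i) the existence of arithmetic progressions on the modular surface, which was the main work of this subsection, and (ii) the commensurability invariance, which was the content of Proposition \ref{CommInv}; the classification statement from \cite{MR} is used as a black box. The one subtlety worth noting is that the intermediate orbifold cover $\wh{M}$ may genuinely be an orbifold rather than a manifold, but Proposition \ref{CommInv} was stated in the orbifold category precisely to accommodate this, so no additional argument is required.
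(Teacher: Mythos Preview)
Your proposal is correct and follows essentially the same approach as the paper: the modular surface has arithmetic progressions by Theorem~\ref{PrimitiveConstructionSL2}, every non-compact arithmetic hyperbolic $2$--orbifold is commensurable with the modular surface by \cite[Theorem 8.2.7]{MR}, and Proposition~\ref{CommInv} transfers the property across the commensurability class. Your explicit passage through a common finite cover $\wh{M}$ simply unpacks what the paper means by ``in tandem with Proposition~\ref{CommInv}.''
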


\begin{rem}\label{estimate-constant}
The constant $C_{\gamma,k}$ is given by (\ref{ConstantEqu}), where the primes $p_i$ are all the possible prime divisors of $\set{1,\dots,k}$. Since $P(\gamma,\eta_{p_i}) \leq p_i+1$, we see that 
\[ C_{\gamma,k}  = \textrm{LCM}\set{P(\gamma,p)~:~p\text { is prime},~p \leq k} \leq \prod_{\substack{p \leq k, \\ p \text{ prime}}} (p+1). \]
\end{rem}

\subsection{Proof of Theorem \ref{LowDimExist}}\label{gen-and-imp}

Theorem \ref{LowDimExist} follows from Proposition \ref{CommInv2} and the following result. 

\begin{cor}\label{AllPrimArise}
Every primitive length for the modular surface occurs in arithmetic progressions
\end{cor}

\begin{proof}
Let $\ell' = \ell/D_\ell$ be the length of the associated absolutely primitive geodesic for the primitive length $\ell$. Set $S = \set{D_\ell,2D_\ell,\dots,kD_\ell}$ and let $\Cal{P}_S$ be the set of distinct prime factors for the elements of $S$. Using our construction above, we can find a constant $C_{\ell',S} \in \N$ such that $\set{C_{\ell',S}D_\ell n\ell'}_{n=1}^k \su \Cal{L}_p(X)$. For that, note that we can simply replace $S$ with the larger set $\set{1,\dots,kD_\ell}$  to produce the desired progression using the length $\ell'$ as in the proof of Theorem \ref{PrimitiveConstructionSL2}. Hence, we see that $C_{\ell',S}D_\ell n \ell' = C_{\ell',S} n \ell$ and so $\set{C_{\ell',S}n \ell} \su \Cal{L}_p(X)$.
\end{proof}

\subsection{Proof of Theorem \ref{MainArithmetic}}

We begin with the following straightforward lemma.

\begin{lemma}\label{tot-geod-gives-AP-general}
If $M, N$ are a pair of non-positively curved orbifolds and $N \hookrightarrow M$ is a locally isometric orbifold embedding, then we have an induced inclusion $\mathcal L_p(N) \hookrightarrow \mathcal L_p(M)$.
\end{lemma}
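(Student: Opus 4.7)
The plan is to argue that the natural map sending a primitive closed geodesic $\gamma$ of $N$ to its image $\iota\circ\gamma$ in $M$ is length-preserving, lands in the set of primitive closed geodesics of $M$, and is injective on conjugacy classes. The first point is immediate: since $\iota\colon N \hookrightarrow M$ is a locally isometric orbifold embedding, it takes geodesics to geodesics of the same length, so the image of any closed orbifold geodesic in $N$ is a closed orbifold geodesic in $M$ of the same length. The only substantive step is verifying that primitivity is preserved.

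The key step I would carry out is the following contrapositive argument. Suppose $\gamma\colon S^1 \to N$ is a primitive closed orbifold geodesic but $\iota\circ\gamma$ is non-primitive in $M$. Then there exist a covering map $p\colon S^1 \to S^1$ of degree at least $2$ and a closed orbifold geodesic $\gamma'\colon S^1 \to M$ with $\iota\circ\gamma = \gamma'\circ p$. In particular, the image of $\gamma'$ lies inside $\iota(N)$. Using that $\iota$ is an embedding of orbifolds (hence injective on underlying spaces and compatible with orbifold structure), one can lift $\gamma'$ uniquely through $\iota$ to a map $\gamma''\colon S^1 \to N$ with $\iota\circ\gamma'' = \gamma'$. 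Because the embedding is locally isometric (i.e.\ $N$ is totally geodesic in $M$), being a geodesic is a local condition that passes from $M$ to $N$, so $\gamma''$ is itself a closed orbifold geodesic in $N$. Then $\iota\circ(\gamma''\circ p) = \iota\circ\gamma$, and injectivity of $\iota$ on maps to $N$ gives $\gamma = \gamma''\circ p$, contradicting primitivity of $\gamma$.

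For the claim that the assignment $\gamma \mapsto \iota\circ\gamma$ induces an inclusion of multisets, I would use the fact that a locally isometric orbifold embedding between non-positively curved orbifolds induces an injective map on orbifold fundamental groups, together with the axis picture in the CAT$(0)$ universal covers $\widetilde{N}\hookrightarrow \widetilde{M}$. Two distinct primitive conjugacy classes in $\pi_1^{\mathrm{orb}}(N)$ correspond to two geodesics of $N$ whose lifts give distinct axes in $\widetilde{N}\subseteq \widetilde{M}$, and the above primitivity argument shows the associated conjugacy classes in $\pi_1^{\mathrm{orb}}(M)$ remain distinct, so the map on primitive length spectra is length-preserving and injective on the indexing data, hence yields an inclusion of multisets $\mathcal L_p(N)\hookrightarrow \mathcal L_p(M)$.

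The main obstacle I anticipate is the careful bookkeeping required in the orbifold category: one must know that a locally isometric orbifold embedding into a non-positively curved orbifold is injective on orbifold fundamental groups, that closed orbifold geodesics correspond to conjugacy classes of infinite-order elements with well-defined translation axes, and that a geodesic of $M$ whose image lies in $\iota(N)$ lifts uniquely to a geodesic of $N$. In the smooth Riemannian setting these are standard, and in the orbifold setting they reduce to standard lifting facts for locally isometric embeddings between developable orbifolds, but each needs to be cited or verified with a short argument.
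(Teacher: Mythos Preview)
The paper does not supply a proof of this lemma; it simply labels it an ``easy lemma'' and moves on. So there is nothing to compare against, and your write-up is already more detailed than the paper. Your contrapositive argument for the preservation of primitivity is correct and is the heart of the matter: the key step, that the putative root $\gamma'$ of $\iota\circ\gamma$ has image contained in $\iota(N)$ and hence can be lifted back to a geodesic of $N$ via the embedding $\iota$, is exactly where both hypotheses (embedding and locally isometric) do their work.

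One caution about your final paragraph. The claim that distinct primitive conjugacy classes in $\pi_1^{\mathrm{orb}}(N)$ have distinct axes in $\widetilde N$ is already false for $[g]$ versus $[g^{-1}]$, and even setting that aside, two elements of $\pi_1(N)$ can become conjugate in $\pi_1(M)$ without being conjugate in $\pi_1(N)$ (for instance $a$ and $a^{-1}$ when $N$ is a core circle inside a flat Klein bottle $M$, where $bab^{-1}=a^{-1}$). So the induced map on primitive conjugacy classes need not be injective, and strict \emph{multiset} inclusion can fail in the non-positively curved setting. This does not affect the paper's applications: for transporting arithmetic progressions from $N$ to $M$ one only needs the \emph{set} inclusion $\mathcal L_p(N)\subseteq \mathcal L_p(M)$, and that follows immediately from your primitivity argument without any bookkeeping on multiplicities.
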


The following is a direct consequence of Lemma \ref{tot-geod-gives-AP-general} and Corollary \ref{Arith2Man}.

\begin{cor}\label{tot-geod-gives-AP}
Let $M$ be a non-positively curved manifold. If $M$ contains an embedded, totally geodesic submanifold commensurable with the modular surface, then $\mathcal L_p(M)$ has arithmetic progressions.
\end{cor}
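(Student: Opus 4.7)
The plan is to chain together the three preceding results. Let $N \hookrightarrow M$ denote the embedded, totally geodesic orbifold that is commensurable with the modular surface $X = \Hy^2/\PSL(2,\Z)$. By definition of commensurability, there exists a common finite orbifold cover $\widetilde N$ of both $N$ and $X$. I will first transfer the arithmetic-progression property from $X$ to $N$ via this covering zig-zag, and then transfer it from $N$ to $M$ via the totally geodesic inclusion.

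For the first step, Corollary \ref{Arith2Man} tells us that $\Cal{L}_p(X)$ has arithmetic progressions. Applying the implication $(a) \Rightarrow (b)$ of Proposition \ref{CommInv} to the finite cover $\widetilde N \to X$ shows that $\Cal{L}_p(\widetilde N)$ has arithmetic progressions. Now applying the implication $(b) \Rightarrow (a)$ of Proposition \ref{CommInv} to the finite cover $\widetilde N \to N$ shows that $\Cal{L}_p(N)$ has arithmetic progressions.

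For the second step, Lemma \ref{tot-geod-gives-AP-general} produces an inclusion $\Cal{L}_p(N) \hookrightarrow \Cal{L}_p(M)$ induced by the totally geodesic orbifold embedding $N \hookrightarrow M$. Consequently, any $k$--term arithmetic progression found inside $\Cal{L}_p(N)$ persists as a $k$--term arithmetic progression inside $\Cal{L}_p(M)$. Since $k$ was arbitrary, $\Cal{L}_p(M)$ has arithmetic progressions, which is what we needed to establish.

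There is essentially no obstacle here: everything is assembled from results already proved in this section. The only point that deserves any care is that the metric on $N$ inherited from $M$ need not literally be a constant curvature $-1$ metric (it is only locally isometric to the totally geodesic copy sitting inside $M$), but this is precisely the situation handled by Lemma \ref{tot-geod-gives-AP-general}, whose statement only requires a locally isometric orbifold embedding; and the commensurability hypothesis is compatible with rescaling because both Proposition \ref{CommInv} and the notion of arithmetic progression are insensitive to uniform scaling of the length spectrum.
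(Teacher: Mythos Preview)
Your proposal is correct and follows essentially the same route as the paper, which does not give an explicit proof but simply declares the corollary an ``easy consequence of Lemma \ref{tot-geod-gives-AP-general} and Corollary \ref{Arith2Man}.'' You have just unpacked that sentence: Corollary \ref{Arith2Man} (whose own proof is precisely the zig-zag through Proposition \ref{CommInv} that you reproduce) gives arithmetic progressions in $\Cal{L}_p(N)$, and Lemma \ref{tot-geod-gives-AP-general} pushes them into $\Cal{L}_p(M)$. Your final paragraph about rescaling is unnecessary---commensurability with the modular surface is meant in the Riemannian sense, so $N$ is already a constant curvature $-1$ orbifold---but it does no harm.
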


We also need the following consequence of the Jacobson--Morosov Lemma:

\begin{lemma}\label{JM-Lemma}
If $M$ is an irreducible, non-compact, locally symmetric, arithmetic orbifold, then $M$ contains a totally geodesic suborbifold that is commensurable with the modular surface.
\end{lemma}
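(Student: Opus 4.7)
The plan is to translate the geometric hypothesis into algebraic data, extract an $\SL_2$ inside the ambient group via Jacobson--Morosov, and then check that its orbit on the symmetric space descends to a totally geodesic arithmetic hyperbolic surface.

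Concretely, write $M = \Ga \backslash X$ with $X = G/K$ the symmetric space of the irreducible, locally symmetric orbifold, and realize $\Ga$ as an arithmetic lattice in $G = \B{G}(\R)$, where $\B{G}$ is a connected semisimple algebraic $\Q$--group without compact $\R$--factors. Because $M$ is non-compact, Godement's compactness criterion forces $\B{G}$ to have $\Q$--rank at least one, which in turn guarantees that $\Ga$ contains a non-trivial unipotent element $u$. Its logarithm $n = \log u$ is a non-zero nilpotent element of $\Fr{g}(\Q)$, the Lie algebra of $\B{G}$.

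Now apply the $\Q$--form of the Jacobson--Morosov Lemma (Theorem 3.17 in Jacobson): there exist $h, f \in \Fr{g}(\Q)$ such that $(n, h, f)$ is an $\Fr{sl}_2$--triple, giving a non-trivial $\Q$--Lie algebra homomorphism $\Fr{sl}_2 \lra \Fr{g}$. Integrating this homomorphism (and replacing $\SL_2$ by $\PSL_2$ if necessary to kill the kernel) produces a $\Q$--subgroup $\B{H} \leq \B{G}$ isogenous to $\SL_2$, with $H := \B{H}(\R)$ locally isomorphic to $\SL(2,\R)$. The orbit $H \cdot x_0 \subset X$ of a suitable basepoint $x_0$ fixed by $K \cap H$ is a totally geodesic copy of the hyperbolic plane $\Hy^2$: this is a standard consequence of the fact that $\Fr{h}$ is a Lie triple system in $\Fr{g}$, so the orbit is a totally geodesic symmetric subspace, and it is two--dimensional and of non-compact type, hence isometric to $\Hy^2$.

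The final step is to identify the image of $H \cdot x_0$ in $M$. Since $\B{H}$ is defined over $\Q$, a standard result of Borel--Harish-Chandra says $\Ga \cap H$ is an arithmetic lattice in $H$, and it is non-uniform because it contains the unipotent $u$. Hence $(\Ga \cap H) \backslash (H \cdot x_0)$ is a non-compact arithmetic hyperbolic $2$--orbifold, and by Theorem 8.2.7 of \cite{MR} it is commensurable with the modular surface. Provided the natural map $(\Ga \cap H) \backslash (H \cdot x_0) \lra \Ga \backslash X = M$ is an embedding, we are done; if it fails to be injective, one passes to a finite--sheeted cover of $M$ (using residual finiteness of $\Ga$) on which the map does embed, and this cover is still an irreducible, non-compact, locally symmetric, arithmetic orbifold, with the same commensurability class. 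The main obstacle is this last technical point --- verifying that the algebraic $\Q$--structure of $\B{H}$ really is inherited from that of $\B{G}$ so that $\Ga \cap H$ is arithmetic in $H$, and ensuring that after possibly passing to a finite cover the totally geodesic immersion becomes an embedding; the rest of the argument is formal once Jacobson--Morosov is invoked over $\Q$.
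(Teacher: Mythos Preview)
Your proof follows essentially the same route as the paper's: Godement's criterion to produce a unipotent, Jacobson--Morosov over $\Q$ to obtain a $\Q$--defined copy of $\SL_2$, Borel--Harish-Chandra for arithmeticity of $\Ga \cap H$, and passage to a finite cover to promote the immersion to an embedding. One small correction on the last step: the paper invokes \emph{separability} of $\Delta$ in $\Lambda$ (Proposition~3.8 of \cite{McReynolds}), not merely residual finiteness --- residual finiteness only separates elements from the identity, whereas lifting a totally geodesic immersion to an embedding in a finite cover requires separating elements of $\Ga$ from the subgroup $\Delta$, which is precisely the separability condition.
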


\begin{proof}
The hypotheses on $M$ imply that the orbifold fundamental group $\pi_1(M) = \Lambda$ is a lattice in a semisimple Lie group $G$  and $\Lambda$ is commensurable with $\B{G}(\Z)$, where $\B{G}$ is a $\Q$--defined semisimple Lie group isogenous to $G$ (see also \cite[5.27]{Witte}). As $M$ is non-compact, $\Lambda$ contains a non-trivial unipotent element by Godement's compactness criterion \cite{Godement} (see also \cite[5.26]{Witte}). By Jacobson--Morosov Lemma, $\B{G}$ has a $\Q$--defined subgroup $\B{G}_0$ that contains this non-trivial unipotent element and is isogenous to $\SL_2$ (see \cite[Lemma 4]{Jacobson}). The group $\B{G}_0(\Z) = \B{G}_0 \cap \B{G}(\Z)$ is an arithmetic lattice in $\B{G}_0(\R)$ by Borel--Harish-Chandra \cite{BHC} and is non-cocompact by Godement's compactness criterion. The subgroup $\B{G}_0 \cap \Lambda < \Lambda$ gives rise to a totally geodesic suborbifold that is commensurable with the modular surface. 
\end{proof}

\begin{rem}
In general, the group $\B{G}_0$ is not the stabilizer under the action of $\B{G}$ of the totally geodesic hyperbolic plane associated to $\B{G}_0$ in the symmetric space associated to $\B{G}$. The full stabilizer $\Stab_{\B{G}_0}$ in $\B{G}$ can also have a compact factor. In particular, the group $\Stab_{\B{G}_0} \cap \Lambda$ contains $\B{G}_0 \cap \Lambda$ as a finite index subgroup. 
\end{rem}

\begin{proof}[Proof of Theorem \ref{MainArithmetic}]
In order to prove Theorem \ref{MainArithmetic} with the above results, we require one additional step as the above submanifold is not necessarily embedded. The subgroup $\B{G}_0 \cap \Lambda$ gives rise to an immersed, totally geodesic suborbifold of the locally symmetric orbifold associated to $\Lambda$. This suborbifold is commensurable with the modular surface and so by Corollary \ref{Cor:CommInv} contains arithmetic progressions. As $\B{G}_0 \cap \Lambda$ is separable in $\Lambda$ (see \cite[Prop 3.8]{McReynolds} and the references therein), there exists a finite index subgroup $\Lambda_0 < \Lambda$ such that $\B{G}_0 \cap \Lambda < \Lambda_0$ and the induced isometric inclusion of the orbifold associated to $\B{G}_0 \cap \Lambda$ embeds in the locally symmetric orbifold associated to $\Lambda_0$. By Lemma \ref{tot-geod-gives-AP-general}, the orbifold associated to $\Lambda_0$ has arithmetic progressions and by Proposition \ref{CommInv}, the orbifold associated to $\Lambda$ has arithmetic progressions.
\end{proof}

\subsection{Proof of Theorem \ref{Arith3Man}}

For a number field $K/\Q$, we can consider the groups $\PSL(2,\Cal{O}_K)$. If $K$ has $r_1$ real places and $r_2$ complex places, up to conjugation, then we define $X_K = ((\Hy^2)^{r_1} \times (\Hy^3)^{r_2})/\PSL(2,\Cal{O}_K)$. The spaces $X_K$ are non-compact, locally symmetric, arithmetic orbifolds. When $K$ is a real quadratic field, these orbifolds $X_K$ are called \textbf{Hilbert modular surfaces}. When $K$ is an imaginary quadratic field, the groups $\PSL(2,\Cal{O}_K)$ are called \textbf{Bianchi groups} and the associated orbifolds $X_K$ are non-compact, arithmetic, hyperbolic $3$--orbifolds.

Corollary \ref{AllPrimArise} holds for the non-compact, locally symmetric, arithmetic orbifolds $X_K$. We again have a function 
\[ P\colon \PSL(2,\Cal{O}_K) \times \PGL(2,K) \longrightarrow \N \] 
given by 
\[ P(\gamma,\eta) = \min\set{j \in \N~:~\eta \gamma^j \eta^{-1} \in \PSL(2,\Cal{O}_K)}. \] 
The general methods used for $\PSL(2,\Z)$ can then be used in this setting to prove the strong form that every primitive length arises in arbitrarily long arithmetic progressions. Important here is that we still have the exact sequence (\ref{ExactSequence}). To be explicit, taking a prime ideal $\Fr{p}$ in $\Cal{O}_K$, we have the exact sequence
\[ 1 \lra V_\Fr{p} \lra \PSL(2,\Cal{O}_K/\Fr{p}^{j+1}) \lra \PSL(2,\Cal{O}_K/\Fr{p}^j) \lra 1 \]
where $(V_\Fr{p},+)$ is a 3--dimensional $(\Cal{O}_K/\Fr{p})$--vector space; note $(V_\Fr{p},+)$ has exponent $p$ where $p$ is the characteristic of the finite field $\Cal{O}_K/\Fr{p}$. We can also conjugate by elements of the form $\eta_\alpha$ for $\alpha \in K^\times$. Since every non-compact, arithmetic orbifold modeled on $((\Hy^2)^{r_1} \times (\Hy^3)^{r_2})$ is commensurable with $X_K$ for some number field $K$ with $r_1$ real places and $r_2$ complex places, the above in tandem with Corollary \ref{Cor:CommInv} proves all of these orbifolds satisfy the strong form for arithmetic progressions. 

\begin{cor}\label{GenArith3Man}
If $M$ is a non-compact, arithmetic orbifold modeled on $((\Hy^2)^{r_1} \times (\Hy^3)^{r_2})$, then every primitive length occurs in arithmetic progressions.
\end{cor}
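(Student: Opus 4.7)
The plan is to upgrade the $\PSL(2,\Z)$ story of Corollary \ref{AllPrimArise} to an analogous statement for $X_K$ itself, and then transfer the resulting ``every primitive length appears in arbitrarily long integer-multiple arithmetic progressions'' property from $X_K$ to any commensurable orbifold $M$ by the two-step Van~der~Waerden coloring already used in the proof of Theorem~\ref{LowDimExist}.

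For the first step, I would re-run the proof of Theorem~\ref{PrimitiveConstructionSL2} in the setting of $\PSL(2, \Cal{O}_K)$, using the ingredients spelled out in the paragraph preceding the corollary. For a hyperbolic $\gamma \in \PSL(2, \Cal{O}_K)$ and a prime ideal $\Fr{p} \subset \Cal{O}_K$, set $P(\gamma, \eta_{\Fr{p}^k}) = \min\{j : \eta_{\Fr{p}^k} \gamma^j \eta_{\Fr{p}^k}^{-1} \in \PSL(2,\Cal{O}_K)\}$ with $\eta_\alpha = \mathrm{diag}(\alpha,1)$, and interpret this via the reduction $r_{\Fr{p}^k}\colon \PSL(2,\Cal{O}_K) \to \PSL(2,\Cal{O}_K/\Fr{p}^k)$ as the least $j$ with $r_{\Fr{p}^k}(\gamma^j)$ lying in the lower-triangular Borel. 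The exact sequence \eqref{ExactSequence} (with $V_{\Fr{p}}$ a three-dimensional $\Cal{O}_K/\Fr{p}$-vector space) and the analogous sequence for the Borel yield $P(\gamma,\eta_{\Fr{p}^k}) = p^{t_k} P(\gamma,\eta_{\Fr{p}})$ where $p$ is the residue characteristic, together with the Borel-residual analog of Lemma~\ref{ResidualBorelLemma} that rules out eventual stabilization of $P(\gamma,\eta_{\Fr{p}^k})$. Applying the Chinese Remainder Theorem across a finite family of distinct prime ideals, exactly as in equation~\eqref{ConstantEqu}, produces $\{C_{\gamma, \Fr{p}_1, \dots, \Fr{p}_v} p_1^{w_1} \cdots p_v^{w_v}\} \subset \Cal{P}(\gamma)$, and the presence of every prime $\leq k$ among the $p_i$ gives a $k$-term arithmetic progression $\{C_k n\}_{n=1}^k \subset \Cal{P}(\gamma)$. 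Primitivity of the resulting $\theta_{\gamma,\eta_r}$ is checked by the same diagonalization argument once one knows an ``absolutely primitive'' $\gamma$ exists: here absolute primitivity means $\lambda_\gamma$ is primitive in the finitely generated unit group $\Cal{O}_{L}^1$ provided by Dirichlet for the splitting field $L = K(\lambda_\gamma)$, and Lemma~\ref{AllFieldsLemma} adapts by embedding the regular representation of $L$ over $K$ into $\GL(2,K)$ and intersecting with $\PSL(2,\Cal{O}_K)$. Combining with the $X_K$-analog of Corollary~\ref{PrimitiveLemma} (to pass from absolutely primitive to merely primitive $\gamma$) and the trivial rescaling trick in the proof of Corollary~\ref{AllPrimArise} produces, for every primitive $\ell \in \Cal{L}_p(X_K)$ and every $k$, a constant $C_{\ell,k} \in \N$ with $\{C_{\ell,k} n \ell\}_{n=1}^k \subset \Cal{L}_p(X_K)$.

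For the second step, given $M$ commensurable with $X_K$, fix a common finite orbifold cover $Y \to M$ and $Y \to X_K$ of degrees $d_M$ and $d_K$. For a primitive length $\ell \in \Cal{L}_p(M)$, lift the corresponding geodesic to $Y$ (length $j\ell$, $j \mid d_M$), then project to $X_K$ (length $\ell' = (j/i)\ell$, $i \mid d_K$). Apply the first step to $\ell' \in \Cal{L}_p(X_K)$ with a sufficiently long target $DN_2$ chosen via two iterated applications of Van der Waerden's theorem (the first sized to survive a $\tau(d_K)$-coloring down to $N_1$ terms, the second sized to survive a $\tau(d_M)$-coloring down to $k$ terms, and $D$ absorbing all divisors of $d_M, d_K$). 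Lifting each term of the long progression from $X_K$ to $Y$ and coloring the index by the lift-degree (a divisor of $d_K$) extracts a monochromatic $N_1$-subprogression of primitive geodesics in $Y$; projecting these to $M$ and coloring by the quotient-degree (a divisor of $d_M$) extracts a monochromatic $k$-subprogression of primitive geodesics in $M$ whose lengths are all of the form $(CD i_0 j)/(j_0 i) \cdot (a' + b'(a''+b''s)) \ell$, which rearranges to $\{\ell(a + bs)\}_{s=1}^k \subset \Cal{L}_p(M)$ for integers $a, b$.

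The main obstacle I anticipate is the first step, specifically the extension of the absolute-primitivity framework to $\PSL(2,\Cal{O}_K)$: when $K$ has higher unit rank, $\lambda_\gamma$ lives in a unit lattice of rank $r_1(L) + r_2(L) - 1$ rather than rank one, so the identification of a ``fundamental power'' and the realization-of-all-quadratic-extensions lemma have to be re-examined (one must argue that a norm-one sublattice of rank $\geq 1$ still yields hyperbolic elements of $\PSL(2,\Cal{O}_K)$ with the desired primitivity). The rest of the argument is a direct transcription of the $\Z$-case together with a routine double application of Van der Waerden.
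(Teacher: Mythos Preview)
Your proposal is correct and follows essentially the same approach as the paper. The paper's own argument is in fact less detailed than yours: it simply asserts that the $\PSL(2,\Z)$ machinery (Theorem~\ref{PrimitiveConstructionSL2}, Corollary~\ref{AllPrimArise}) carries over to $\PSL(2,\Cal{O}_K)$ via the same exact sequence~\eqref{ExactSequence} for prime-power ideals and the conjugators $\eta_\alpha$, and then invokes the commensurability transfer (the Van~der~Waerden double-coloring argument underlying Theorem~\ref{LowDimExist}) to pass from $X_K$ to any commensurable $M$. Your flagged concern about absolute primitivity when the unit group has rank greater than one is a legitimate technical point that the paper does not explicitly resolve; the relevant observation is that the relation $\lambda_\mu^{j'} = \lambda_\gamma^j$ already forces $\lambda_\mu$ into the rank-one sublattice determined by $\lambda_\gamma$ (up to torsion), so ``absolutely primitive'' should be read as ``not a proper power modulo roots of unity,'' after which the diagonalization argument proceeds as before.
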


When $r_1=0$ and $r_2=1$, we obtain Theorem \ref{Arith3Man}. When $r_1+r_2>1$, the arithmeticity assumption is unnecessary by Margulis' arithmeticity theorem.

\section{Final remarks}\label{Final}

\subsection{Conjectural characterization of arithmeticity}

In this article, we have shown that for negatively curved metrics, despite the fact that almost arithmetic progressions are abundant, genuine arithmetic progressions are rare. We have provided several examples of arithmetic negatively curved (and non-positively curved) manifolds which have arithmetic progressions. It is tempting to conjecture that {\it all} arithmetic manifolds have arithmetic progressions. In fact, we have little doubt that this holds. It is tempting to conjecture that the presence of arithmetic progressions in the primitive length spectrum can be used to characterize arithmetic manifolds. However, one should be a bit careful. Using Corollary \ref{tot-geod-gives-AP}, one can easily produce examples of non-arithmetic, negatively curved manifolds whose length spectrum has arithmetic progressions. Start with a high-dimensional hyperbolic manifold $M$ which contains a non-compact arithmetic hyperbolic surface as a totally geodesic submanifold $N$; every non-compact, arithmetic hyperbolic $n$--manifold has such a surface (see, for instance, Theorem 5.1 in \cite{McReynolds} for a description of the non-compact arithmetic lattices in $\Isom(\Hy^n)$). Pick an arbitrary point $p\in M\setminus N$, and slightly perturb the metric in a small enough neighborhood of $p$. If the perturbation is small enough, the resulting Riemannian manifold $(M, g)$ will still be negatively curved, though no longer hyperbolic. Since the perturbation is performed away from the submanifold $N$, the latter will still be totally geodesic inside $(M,g)$. So Corollary \ref{tot-geod-gives-AP} ensures that the resulting $\mathcal L_p(M,g)$ has arithmetic progressions, even though $(M,g)$ is not arithmetic (in fact, not even locally symmetric). One simple result of this discussion is the following:

\begin{cor}
The set of metrics whose primitive length spectrum have arithmetic progressions is not discrete.
\end{cor}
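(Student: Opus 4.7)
The plan is to make the sketch preceding the corollary precise by exhibiting an entire one-parameter family of pairwise distinct metrics, each having arithmetic progressions in its primitive length spectrum, and all clustering at a single metric in the Lipschitz topology. Concretely, I would start by invoking the cited results to fix a non-compact arithmetic hyperbolic manifold $M$ of sufficiently high dimension containing an embedded, totally geodesic, non-compact arithmetic hyperbolic surface $N \subset M$ (passing to a finite cover via separability if needed, as in the proof of Theorem \ref{MainArithmetic}). Call the hyperbolic metric $g_0$. Pick any point $p \in M \setminus N$ and a radius $r > 0$ smaller than the distance $d(p, N)$ and smaller than the injectivity radius of $(M, g_0)$ at $p$, so that the ball $B(p; r)$ is disjoint from $N$.

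Next, I would construct a family of perturbations $\{g_t\}_{t \in [0,t_0)}$ supported in $B(p; r)$, parametrized by an amplitude $t$. A convenient way is to pick a smooth bump function $\varphi \colon M \to [0,1]$ with $\mathrm{supp}(\varphi) \subset B(p;r)$ and $\varphi(p) = 1$, and set $g_t := (1 + t\varphi)\, g_0$, choosing $t_0$ small enough so that (using the standard continuity of curvature in the $C^2$-norm) every $g_t$ is negatively curved. Because the perturbation is supported off $N$, the submanifold $N$ remains totally geodesic and locally isometric to a non-compact arithmetic hyperbolic surface inside $(M, g_t)$ for every $t \in [0, t_0)$. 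Applying Corollary \ref{tot-geod-gives-AP} then gives arithmetic progressions in $\mathcal L_p(M, g_t)$ for every such $t$.

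Finally, I would check that the $g_t$ are pairwise distinct as points in $\mathcal M(M)$ equipped with the Lipschitz topology, and that $g_t \to g_0$ as $t \to 0^+$. Distinctness is immediate from pointwise evaluation at $p$: the optimal bi-Lipschitz constant between $g_t$ and $g_{t'}$ is at least $\sqrt{(1+t)/(1+t')}$ (attained by a tangent vector at $p$), so the Lipschitz distance $\log\sqrt{(1+t)/(1+t')}$ is strictly positive for $t \neq t'$, and it tends to $0$ as both parameters tend to $0$. Hence $g_0$ is a non-isolated accumulation point of the set of metrics with arithmetic progressions, which gives the non-discreteness claim.

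The only real obstacle is keeping track of negativity of curvature under the perturbation and confirming that $N$ really remains totally geodesic; both reduce to the fact that the perturbation is supported in a small ball disjoint from $N$ and that the conformal factor $1 + t\varphi$ can be made $C^2$-close to $1$ by shrinking $t_0$. Once those are in hand, the conclusion is automatic from Corollary \ref{tot-geod-gives-AP}.
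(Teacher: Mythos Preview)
Your proposal is correct and follows the same approach as the paper: the paragraph immediately preceding the corollary gives exactly this sketch (perturb the hyperbolic metric in a small ball away from the totally geodesic arithmetic surface $N$, so $N$ stays totally geodesic and Corollary~\ref{tot-geod-gives-AP} applies), and you simply make it explicit with a one-parameter conformal family $g_t=(1+t\varphi)g_0$.

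One small correction that does not affect the conclusion: your lower bound on the Lipschitz distance, obtained by evaluating at a tangent vector at $p$, is not valid as stated. The Lipschitz distance is an infimum over \emph{all} diffeomorphisms $f\colon (M,g_t)\to(M,g_{t'})$, not just the identity, so comparing norms at the single point $p$ only bounds the bi-Lipschitz constant of the identity map. What you actually need is much less: the $g_t$ are pairwise distinct as elements of $\mathcal M(M)$ (clear, since they differ pointwise at $p$; indeed for small $t$ they are not even isometric to $g_0$, as $g_0$ has constant curvature and $g_t$ does not), and $g_t\to g_0$ in the Lipschitz topology (which your identity-map estimate does give). That already makes $g_0$ a non-isolated point of the set in question.
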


Note that the non-arithmetic examples of Gromov--Piatetski-Shapiro \cite{GPS} are built by gluing together two 
arithmetic manifolds along a common totally geodesic hypersurface. Being arithmetic, this hypersurface contains arithmetic progressions, and from our Lemma \ref{tot-geod-gives-AP-general}, the hybrid non-arithmetic manifold would then also have arithmetic progressions. Reid \cite[Thm 3]{Reid2} constructed infinitely many commensurability classes of non-arithmetic hyperbolic 3--manifolds, which are hyperbolic knot complements in $S^3$ with a unique commensurability class of immersed totally geodesic surfaces. All of these surfaces cover the modular surface and hence by Corollary \ref{tot-geod-gives-AP}, these non-arithmetic hyperbolic 3--manifolds have arithmetic progressions.

However, recall that our constructions actually show that the arithmetic manifolds we consider satisfy a much stronger condition than just having arithmetic progressions. Namely, {\it every primitive geodesic length occurs in arithmetic progressions}. The hybrid manifolds of Gromov--Piatetski-Shapiro are unlikely to satisfy this much stronger condition, as a generic primitive geodesic is unlikely to reside on an arithmetic submanifold. Indeed recent work of Fisher--Lafont--Miller--Stover \cite{FLMS} shows such $n$--manifolds contain only finitely many closed totally geodesic submanifolds of dimension $2\leq k \leq n-1$ that are maximal (in terms of containment).
In particular, it is unclear where one might find infinitely many primitive geodesics that have the same length (up to rational multiples) as our given primitive geodesic. 


\underline{\textbf{Conjecture A.}} \textsl{Let $(M,g)$ be a closed or finite volume, complete Riemannian manifold. If 
$\mathcal L_p(M,g)$ has every primitive length occurring in arithmetic progressions (in the sense of Section 
\ref{gen-and-imp}), then $(M,g)$ is arithmetic.}

A much weaker version of Conjecture A, where we restrict the topological type of the manifold $M$, would already
be of considerable interest:

\underline{\textbf{Conjecture B.}} \textsl{Let $M$ be a closed manifold that admits a locally symmetric metric, and
assume that the universal cover of $M$ has no compact factors and $M$ is irreducible. Given a metric $(M,g)$ on $M$,
assume that $\mathcal L_p(M,g)$ has every primitive length occurring in arithmetic progressions (in the sense of 
Section \ref{gen-and-imp}). Then $g$ is a locally symmetric metric, and is arithmetic.}

At present, it is still an open problem as to whether higher rank, locally symmetric manifolds $(M,g_{sym})$ 
are determined in the space of Riemannian metrics by their primitive length spectrum. The local version of this type of rigidity is often referred to as \textbf{spectral isolation}. The spectral isolation of symmetric or locally symmetric metrics seems to be a folklore conjecture that has been around for some time; see \cite{GSS} for some recent work and history on this problem. Conjecture B implies the stronger global spectral rigidity conjecture immediately for locally symmetric metrics; one might say the locally symmetric metric is \textbf{spectrally isolated globally} in that case. Our last conjecture is weaker than Conjecture A and B.

\underline{\textbf{Conjecture C.}} \textsl{Let $M$ be a closed manifold that admits a negatively curved metric and let 
$\Cal{M}(M)$ denote the space of negatively curved metrics with the Lipschitz topology. Consider the metrics 
with the property that $\mathcal L_p(M,g)$ has every primitive length occurring in arithmetic progressions
(in the sense of Section \ref{gen-and-imp}). Then the set of such metrics forms a discrete (or even better, finite) 
subset of $\Cal{M}(M)$.}

We do not know whether Conjecture C holds when $M$ is a closed surface of genus at least two. Higher genus closed surfaces are a test case for this conjecture.

\subsection{Other proposed characterizations of arithmeticity}

Sarnak \cite{Sarnak1} proposed a characterization for arithmetic surfaces that is also of a geometric nature. For a Fuchsian group $\Gamma < \PSL(2,\R)$, set $\Tr(\Gamma) = \set{\abs{\Tr(\gamma)}~:~\gamma \in \Gamma}$. A Fuchsian group satisfies the \textbf{bounded clustering property} if there exists a constant $C_\Gamma$ such that, for all integers $n$, we have $\abs{\Tr(\Gamma) \cap [n,n+1]} < C_\Gamma$. It was verified by Luo--Sarnak \cite{LuoS} that arithmetic surfaces satisfy the bounded clustering property. Schmutz \cite{Schmutz} proposed a characterization of arithmeticity based on the function $F(x) = \abs{\Tr(\Gamma) \cap [0,x]}$. Specifically, $\Gamma$ is arithmetic if and only if $F(x)$ grows at most linearly in $x$. Geninska--Leuzinger \cite{GL} verified Sarnak's conjecture in the case where $\Gamma$ contains a non-trivial parabolic isometry. 
In \cite{GL}, they also point out a gap in \cite{Schmutz} that verified the linear growth characterization for lattices with a non-trivial parabolic isometry. At present, this verification seems to still be open. These characterizations of arithmeticity are based on the fact that arithmetic manifolds have unusually high multiplicities in the primitive geodesic length spectrum, a phenomenon first observed by Selberg. One explanation for the high multiplicities can be seen from our proof that arithmetic, non-compact surfaces have arithmetic progressions. Specifically, from one primitive length $\ell$, via the commensurator, we can produce infinitely many primitive lengths of the form $\pr{\frac{m}{d}}\ell$, where $m$ ranges over an infinite set of integers and $d$ ranges over a finite set of integers. When $\ell$ is the associated length of an absolutely primitive element, we obtain lengths of the form $m\ell$ as $m$ ranges over an infinite set of integers. Given the freedom on the production of these lengths, it is impossible to imagine that huge multiplicities will not arise. Other characterizations of arithmeticity given by Cooper--Long--Reid \cite{CLR} (see also Reid \cite{Reid}) and Farb--Weinberger \cite{FW} exploit the abundant presence of symmetries, and thus are still in the realm of Margulis' characterization via commensurators. Reid \cite{Reid1}, Chinburg--Hamilton--Long--Reid \cite{CHLR}, and Prasad--Rapinchuk \cite{PrRap} also recover arithmeticity using spectral invariants, and so we feel our proposed characterization sits somewhere between the commensurator and spectral sides.



\end{document}